\begin{document}

\markboth{Louis H. Kauffman}
{}

\catchline{}{}{}{}{}

\title{Virtual Knot Cobordism and the Affine Index Polynomial}

\author{Louis H. Kauffman}

\address{Department of Mathematics, Statistics and Computer Science \\ 851 South Morgan Street   \\ University of Illinois at Chicago\\
Chicago, Illinois 60607-7045\\ and\\ Department of Mechanics and Mathematics\\ Novosibirsk State University\\Novosibirsk, Russia\\$<$kauffman@uic.edu$>$}

\maketitle

\begin{abstract}
 This paper studies cobordism and concordance for virtual knots. We define the affine index polynomial, prove that it is a concordance invariant for knots and links (explaining when it is defined for links), show that it is also invariant under certain forms of labeled cobordism and study a number of examples in relation to these phenomena. Information on determinations of the four-ball genus of some virtual knots is obtained by via the affine index polynomial in conjunction with results on the genus of positive virtual knots using joint work with Dye and Kaestner.
\end{abstract}

\keywords{Knot, link, knotoid, virtual knot, invariant, cobordism, labeled cobordism, concordance, Khovanov homology, Rasmussen invariant, affine index polynomial.}

\ccode{Mathematics Subject Classification 2000: 57M25, 57M27}

\section{Introduction}
This paper studies the concordance invariance of the affine index polynomial \cite{Affine}, denoted $P_{K}(t).$ This invariant is also called the {\it writhe polynomial}, 
$W_{K}(t),$ in the context of  Gauss diagrams, see \cite{Boden} where the $W$ notation is used and where a related polynomial is denoted by $P_{K}(t)$ (there should be no confusion). In this paper we work in the context of virtual knot and link diagrams and extend the definitions
in \cite{Affine} to an affine index polynomial for links (with affine labeling as described in the body of the paper). We prove that this generalized invariant is a concordance invariant of knots and links.\\

The paper is organized as follows.
In Section 2 we review basics of virtual knot theory. In Section 3 we review virtual link cobordism \cite{VKC} and discuss the four-ball genus of virtual links, recalling the construction of the virtual Seifert surface for a virtual link and the theorem \cite{DKK} that the four-ball genus of a positive virtual link is equal to the genus of its virtual Seifert surface. In Section 4 we re-develop the affine index polynomial \cite{Affine} extending its definition to labeled links and proving that it is a concordance invariant of virtual knots and links. The reader should note that concordance invariance of the affine index polynomial for knots is proved in \cite{Boden} by Gauss diagram techniques. In the present work we use affine labelings of the knot and link diagrams. In this context, we generalize the theorem to include links that are compatible with affine labeling. We also show how the affine index polynomial is invariant under certain cobordisms of knots and links that are compositions of saddle moves at crossings that have null weights in the affine labeling. This cobordism invariance was already observed in \cite{VKC} and here it is useful in understanding the core locus of 
the non-triviality of the polynomial. We illustrate this comment with a number of examples in the body of the paper.\\

\noindent {\bf Remark.} {\it Knotoids} are knot and link diagrams with free ends in possibly distinct regions, taken up to Reidemeister moves that never move an arc
across an endpoint. The subject of link cobordism and our results about the affine index polynomial in this paper all apply as well to the cobordism of knotoids. 
See \cite{GK1,GK2} for our work on knotoids. In these papers
we show how to define the affine index polynomial in the category of knotoids. In particular the theorems about concordance invariance of the affine index polynomial are valid in the knotoid category. This subject of cobordism of knotoids will be taken up in a paper distinct from the present work.\\

 \section{Virtual Knot Theory}
Virtual knot theory \cite{VKT,SVKT,DKT,DVK,Intro} studies a generalization of classical knot theory that we describe by using diagrams that include a virtual crossing that is neither over nor under. The virtual knot behind such a diagram can be regarded as an abstract knot diagram that is determined by the cyclic ordered structure of its crossing data. The virtual crossings are the result of immersing the abstract diagram into the plane. A diagrammatic theory generalizing the Reidemeister moves defines the virtual theory. Virtual knots can be studied by examining embeddings of curves in thickened surfaces of arbitrary
genus, up to the addition and removal of empty handles from the surface.  This paper, however, will concentrate on the diagrammatic point of view and will utilize the combinatorics of the virtual crossing structure. Classical knot theory embeds in virtual knot theory. The theory of virtual cobordism developed here is intended for the diagrammatic point of view. It will be the subject of other work by the author to forge relationships between the diagrammatic cobordisms described here and embedded cobordisms related to the surface embeddings for virtual diagrams.\\

In the diagrammatic theory of virtual knots one adds 
a {\em virtual crossing} (see Figure~\ref{Figure 1}) that is neither an over-crossing
nor an under-crossing.  A virtual crossing is represented by two crossing segments with a small circle
placed around the crossing point. \\

Moves on virtual diagrams generalize the Reidemeister moves for classical knot and link
diagrams.  See  Figure~\ref{Figure 1}.  Classical crossings interact with
one another according to the usual Reidemeister moves, while virtual crossings are artifacts of the structure in the plane. 
Adding the global detour move to the Reidemeister moves completes the description of moves on virtual diagrams. In  Figure~\ref{Figure 1} we illustrate a set of local
moves involving virtual crossings. The global detour move is
a consequence of  moves (B) and (C) in  Figure~\ref{Figure 1}. The detour move is illustrated in  Figure~\ref{Figure 2}.  Virtual knot and link diagrams that can be connected by a finite 
sequence of these moves are said to be {\it equivalent} or {\it virtually isotopic}. Figure~\ref{Figure 4} illustrates how a virtual knot can be interpreted in terms of the Gauss code (indicating a sequence of over and undercrossings with signs that determine the diagram) and via an embedded curve in a thickened surface.\\

Virtual knot diagrams are usually represented as diagrams in the plane, but the theory is not changed if one regards the diagram as drawn on the surface of a two dimensional sphere. Moves that swing an arc around the 
two-sphere can be accomplished in the plane by using the detour move. Again, we refer to the reference papers at the beginning of this section for the reader who is interested in more details about the foundations of 
virtual knot and link theory.\\

\begin{figure}[htb]
     \begin{center}
     \begin{tabular}{c}
     \includegraphics[width=8cm]{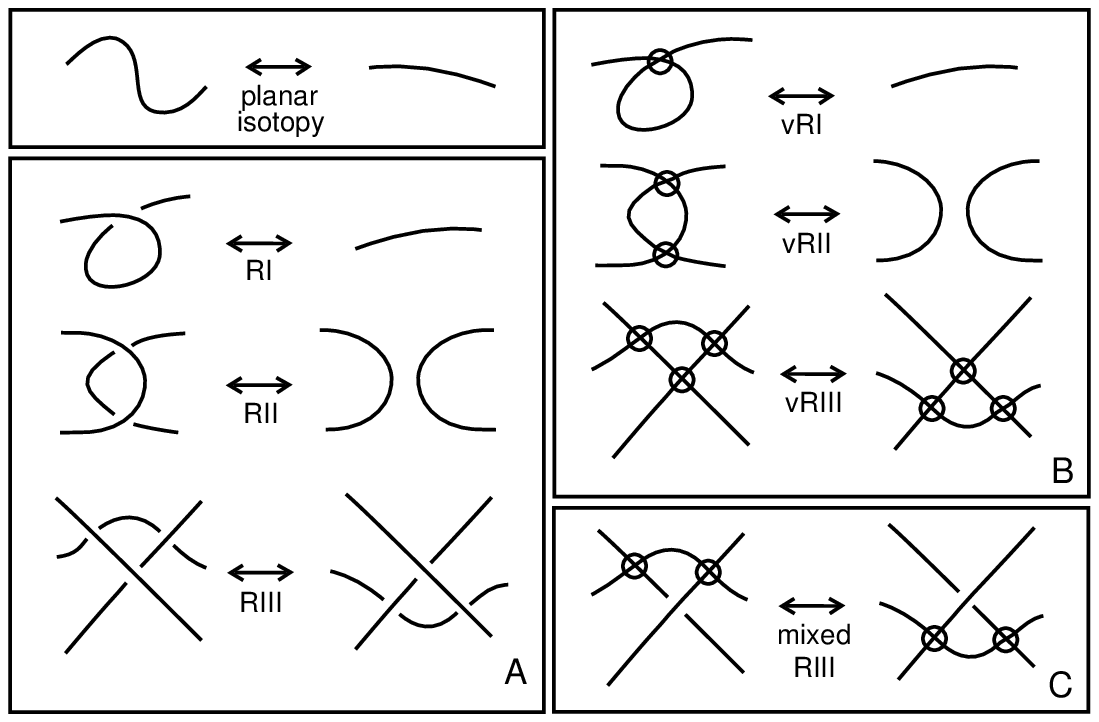}
     \end{tabular}
     \caption{\bf Moves}
     \label{Figure 1}
\end{center}
\end{figure}

\begin{figure}[htb]
     \begin{center}
     \begin{tabular}{c}
     \includegraphics[width=6cm]{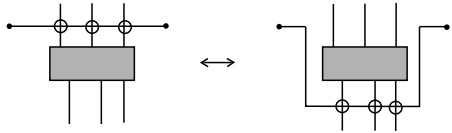}
     \end{tabular}
     \caption{\bf Detour Move}
     \label{Figure 2}
\end{center}
\end{figure}

\begin{figure}[htb]
     \begin{center}
     \begin{tabular}{c}
     \includegraphics[width=8cm]{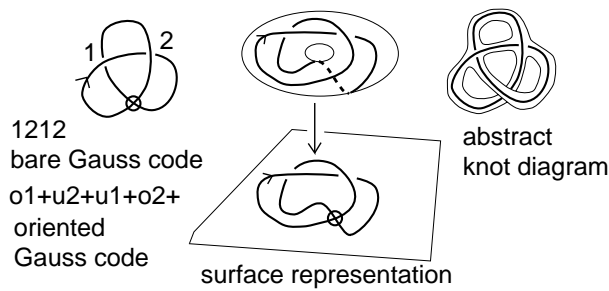}
     \end{tabular}
     \caption{\bf Representations of Virtual Knots}
     \label{Figure 4}
\end{center}
\end{figure}

One can understand virtual diagrams as representatives for oriented Gauss codes or Gauss diagrams \cite{GPV}, \cite{VKT,SVKT}. 
Such codes do not always have planar realizations. An attempt to embed such a code in the plane
leads to the production of the virtual crossings. The detour move of Figure~\ref{Figure 2} makes the particular choice of virtual crossings 
irrelevant. Virtual isotopy (generated by Reidemeister moves and detour moves) the same as the equivalence relation generated on the collection
of oriented Gauss codes by the analog of Reidemeister moves on these codes. 
(That is, one can translate the diagrammatic Reidemeister moves to combinatorial operations on the Gauss codes.) In Figure~\ref{Figure 4} we illustrate a number of representations of virtual knots, the Gauss code, the representation as a diagram on a 
closed surface, the projection to a virtual diagram from such a surface, and the abstract link diagram that can be regarded as a neighborhood of the embedding of a diagram in a closed surface. The interested reader will find
more details about these representations in the papers we have mentioned about virtual knot theory.\\

\noindent{\bf Remark.} In Figure~\ref{Figure 1} we have indicated the three Reidemeister moves by $RI, RII, RIII.$ We shall often refer to these moves as the first, second and third Reidemeister moves.\\

Many invariants for classical knots extend to invariants of virtual knots and links, including the bracket polynomial model for the Jones polynomial, Khovanov homology, the arrow polynomial extension of the bracket polynomial, fundamental group and quandles. We refer the reader to \cite{Intro,DKK} for more information about these invariants. In this paper we will concentrate on the affine index polynomial \cite{Affine} and its cobordism properties.\\

\subsection{Parity and Odd Writhe}
Parity is an important theme in virtual knot theory and figures in many investigations of this subject.
In a virtual knot diagram there can be both even and odd crossings.  A crossing is {\it odd} if 
it flanks an odd number of symbols in the Gauss code of the diagram. A crossing is {\it even} if 
it flanks an even number of symbols in the Gauss code of the diagram. For example, in 
Figure~\ref{Figure 4} we illustrate a virtual knot with bare Gauss code $1212.$ Both crossings in the diagram are odd. In any classical knot diagram all crossings are even.\\

In \cite{SL} we introduced the  {\it odd writhe} $J(K)$ for any virtual diagram $K.$ $J(K)$  is the sum of the signs of the odd crossings. Classical diagrams have zero odd writhe. Thus if $J(K)$ is non-zero, then $K$ is not equivalent to any classical knot. For the mirror image $K^{*}$ of any diagram $K,$ we have the formula $J(K^{*}) = - J(K).$ Thus, when $J(K) \ne 0,$  we know that the knot $K$ is
not classical and not equivalent to its mirror image. Parity does all the work in this simple invariant.
For example, if $K$ is the virtual knot in Figure~\ref{Figure 4}, the we have $J(K) = 2.$ Thus $K,$ the simplest virtual knot, is non-classical and it is chiral (inequivalent to its mirror image).\\

In Section 4 of this paper we will examine a generalization of the odd writhe to a polynomial invariant of virtual knots (the affine index polynomial \cite{Affine}) and we shall see how these invariants behave under 
cobordism and concordance of virtual knots, as described in Section 3.\\

\section{Virtual Knot Cobordism and Concordance}
\begin{definition} Two oriented knots or links $K$ and $K'$ are {\it virtually cobordant}  if one may be obtained from the other by a sequence of virtual isotopies (Reidemeister moves plus detour moves) plus births, deaths and oriented saddle points, as illustrated in  Figure~\ref{saddle}. A {\it birth} is the introduction into the diagram of an isolated unknotted circle. A {\it death} is the removal from the diagram
of an isolated unknotted circle. A saddle point move results from bringing oppositely oriented arcs 
into proximity and resmoothing the resulting site to obtain two new oppositely oriented arcs. See Figure~\ref{saddle} for an illustration of the process. Figure~\ref{saddle} also illustrates the {\it schema} of surfaces that are generated by  cobordism process. These are abstract surfaces with well defined genus in terms of the sequence of steps in the cobordism. In the Figure we illustrate two examples of genus zero, and one example of genus 1. We say that a cobordism has genus $g$ if its schema has genus $g.$ Two virtual knots or links are {\it virtually concordant} if there is a cobordism of genus zero  connecting them. Note that virtual concordance is a special case of virtual cobordism. We shall often just say {\it cobordant} or {\it concordant} with the word virtual assumed.\\

A virtual knot is said to be a {\it slice} knot if it is virtually concordant to the unknot, or equivalently if it is virtually concordant to the empty knot (The unknot is concordant to the empty knot via one death). As we shall see below, {\it every virtual knot or link is cobordant to the unknot}. Another way to say this, is to say that there is a {\it virtual surface} (schema) whose boundary is the given virtual knot. The reader should note that when we speak of a virtual surface, we mean a surface schema that is generated by saddle moves, maxima and minima as described above. \end{definition}

\noindent {\bf Remark.} The reader should note the sharp difference between the concepts of {\it cobordism} of virtual knots and {\it concordance} of virtual knots. Two knots that are cobordant can mutually bound a virtual 
surface of arbitrary genus. Two knots that are concordant must mutually bound a surface of genus zero. Just as in the classical case of knot concordance, this is a highly restricted relationship and one wants to be able to 
determine whether two knots are concordant, whereas any two knots are cobordant. On the other hand, the least genus for a cobordism surface between two knots or between a knot and the unknot is of great interest.\\

\begin{definition} The {\it four-ball genus} $g_{4}(K)$ of a virtual knot or link $K$ is the least genus among all virtual surfaces obtained by virtual cobordism that bound $K.$ As we shall see below, there is a simple upper bound on the four-ball genus for any virtual knot or link and a definite result for the four-ball genus of positive virtual knots \cite{DKK}. Note that in this definition of four-ball genus we have not made reference to an embedding of the surface in the four-ball $D^{4}.$ The surface constructed by a virtual cobordism is, for this paper, an abstract surface with a well-defined genus. This same surface can be given the structure of virtual surface diagram analogous to a virtual knot or link diagram (see \cite{Takeda}) but we will not discuss this aspect of virtual surfaces in the present paper. Note that virtual slice knots are virtual knots $K$ with 
$g_{4}(K) = 0.$\end{definition}

\begin{figure}
     \begin{center}
     \begin{tabular}{c}
     \includegraphics[width=8cm]{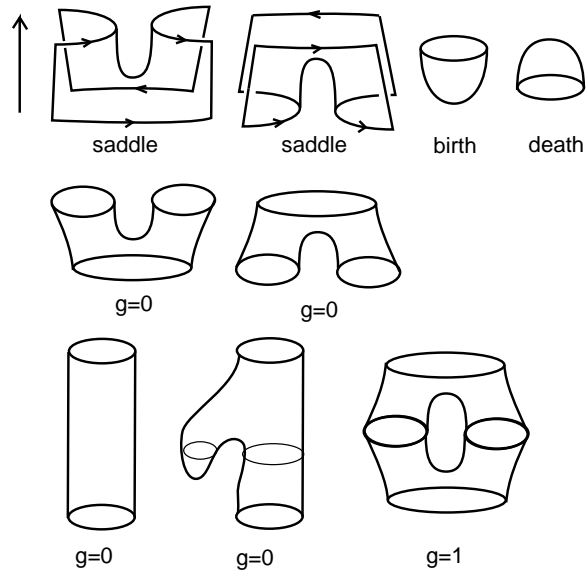}
     \end{tabular}
     \caption{\bf Saddles, Births and Deaths}
     \label{saddle}
\end{center}
\end{figure}

In Figure~\ref{vstevedore} we illustrate the {\it virtual stevedore's knot} that we will denote by $VS,$ and show that it is a slice knot in the sense of the above definition. This figure illustrates how the surface schema whose boundary in the virtual stevedore is evolved via the saddle point that produces two virtually unlinked curves that are isotopic to a pair of curves that can undergo deaths to produce the genus zero slicing surface. We will use this example to illustrate our theory of virtual knot cobordism, and the questions that we are investigating.\\

In Figure~\ref{vertical} we illustrate a connected sum of a virtual knot $K$ and its {\it vertical mirror image} $K^{!}.$ The vertical mirror image is obtained by reflecting the diagram in a plane perpendicular to the plane 
of the diagram {\it and reversing the orientation of the resulting diagram}. We indicate this particular connected sum by $K \sharp K^{!}.$ While connected sum of virtual knots is not in general defined except by a diagrammatic choice, we do have a diagrammatic definition of this 
connected sum and it is the case that {\it $K \sharp K^{!}$ is a slice knot for any virtual diagram $K.$} The idea behind the proof of this statement is illustrated in Figure~\ref{verticalslice}. Saddle points can be made by pairing arcs across the mirror and the diagram resolves into a collection of virtual trivial circles. We omit the detailed proof of this fact about virtual concordance. This result is a direct generalization of the 
corresponding result for classical knots and links \cite{FoxMilnor}.\\

\begin{figure}
     \begin{center}
     \begin{tabular}{c}
     \includegraphics[width=6cm]{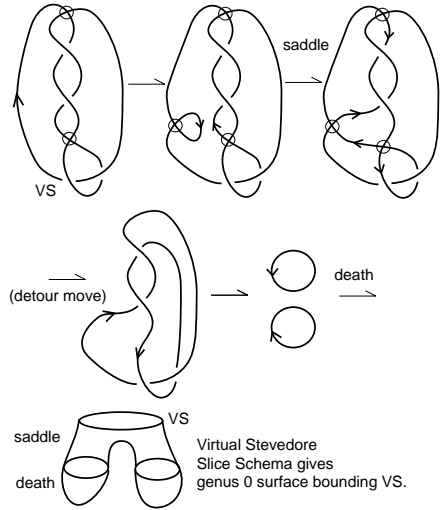}
     \end{tabular}
     \caption{\bf Virtual Stevedore is Slice}
     \label{vstevedore}
\end{center}
\end{figure}

\begin{figure}
     \begin{center}
     \includegraphics[width=6cm]{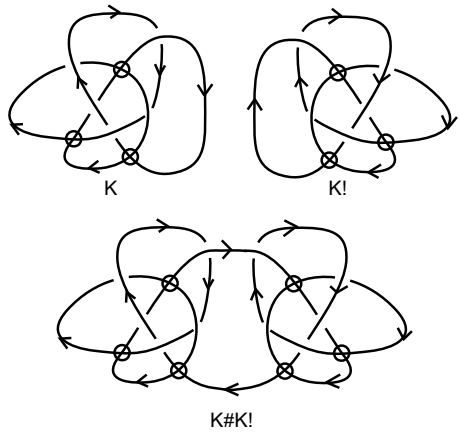}
     \caption{\bf Vertical Mirror Image}
     \label{vertical}
\end{center}
\end{figure}

\begin{figure}
     \begin{center}
     \includegraphics[width=6cm]{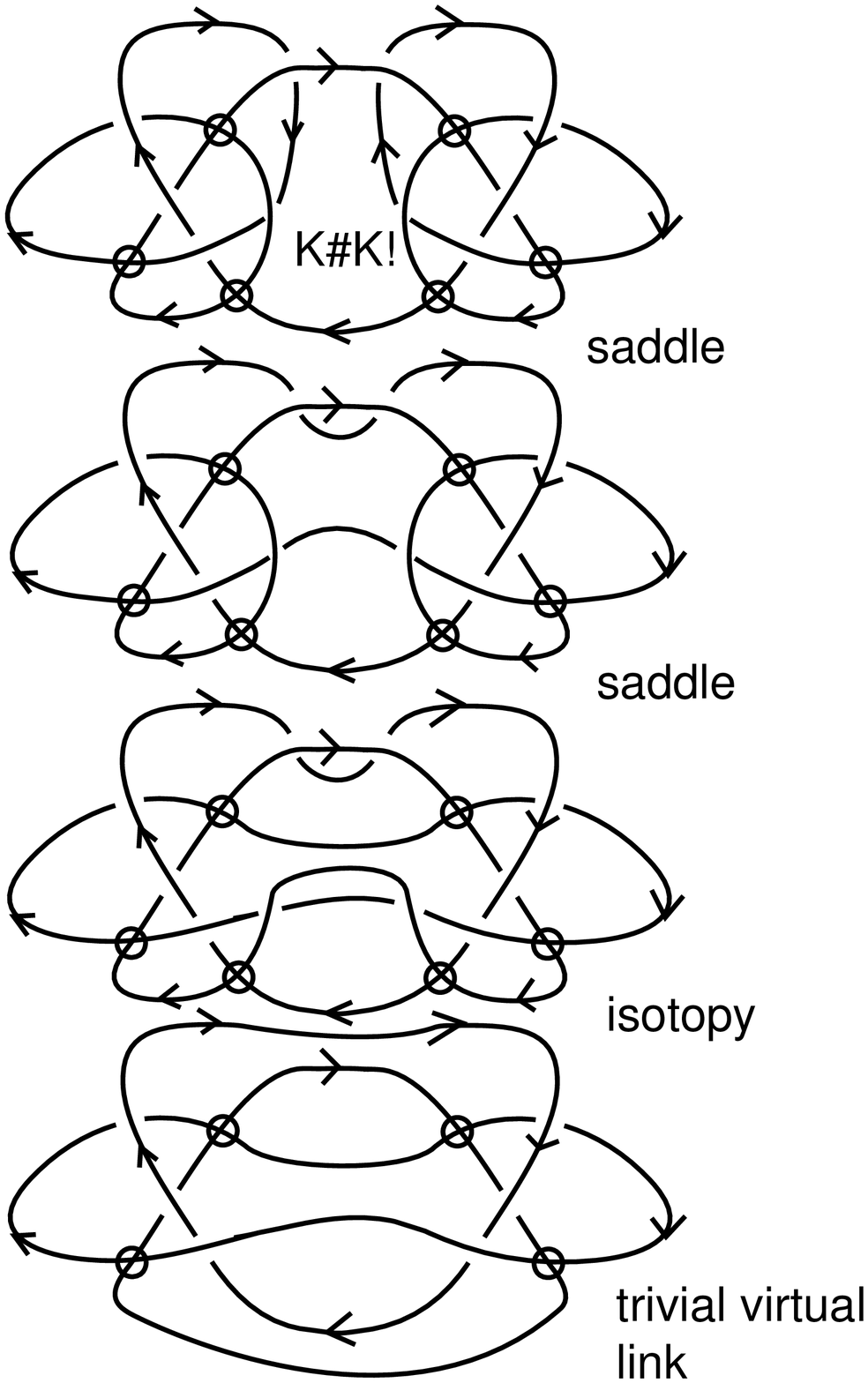}
     \caption{\bf Connected Sum With Vertical Mirror Image is Slice}
     \label{verticalslice}
\end{center}
\end{figure}

\subsection{Spanning Surfaces for Knots and Virtual Knots and the Four-Ball Genus of Positive Virtual Knots}
Every oriented classical knot or link bounds an embedded orientable surface in three-space. A representative surface of this kind can be obtained by Seifert's algorithm (See \cite{OK,FKT,KP}). We illustrate Seifert's algorithm for a trefoil diagram in Figure~\ref{seifert}. The algorithm proceeds as follows: At each oriented crossing in a given diagram 
$K,$ smooth that crossing in the oriented manner (reconnecting the arcs locally so that the crossing disappears and the connections respect the orientation). The result operation is a collection of oriented simple closed curves in the plane, usually called the {\it Seifert circles}. To form the {\it Seifert surface} $F(K)$ for the diagram $K,$ attach disjoint discs to each of the Seifert circles, and connect these discs to one another by local half-twisted bands at the sites of the smoothing of the diagram. This process is indicated in Figure~\ref{seifert}. In that figure we have not completed the illustration of the outer disc.\\

\begin{lemma} Let $K$ be a classical knot diagram with $n$ crossings and $r$ Seifert circles.
Then the genus of the Seifert Surface $F(K)$ is given by the formula
$$g(F(K)) =(1/2)( -r + n +1).$$
\end{lemma}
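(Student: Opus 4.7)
The plan is to compute the Euler characteristic of $F(K)$ directly from its construction and then solve for the genus using the standard formula for a compact orientable surface with boundary.

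First I would describe $F(K)$ as a handle decomposition: the $r$ Seifert discs play the role of $0$-handles, and the $n$ twisted bands inserted at the crossing sites play the role of $1$-handles. Equivalently, $F(K)$ deformation retracts onto a $1$-complex with $r$ vertices (one for each disc) and $n$ edges (one for each band). In either description, a direct count gives
\[
\chi(F(K)) \;=\; r - n.
\]
One can verify this incrementally: the $r$ disjoint discs contribute $\chi = r$; attaching each band (itself contractible, $\chi=1$) along two arcs lowers $\chi$ by $1$, so $n$ bands lower $\chi$ by $n$ in total.

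Next I would invoke the classification of compact orientable surfaces with boundary: if $F(K)$ is connected, has genus $g$ and has $b$ boundary components, then
\[
\chi(F(K)) \;=\; 2 - 2g - b.
\]
Since $K$ is a knot, $\partial F(K) = K$ has $b=1$ component. Equating the two expressions for $\chi$ gives $1 - 2g = r - n$, hence
\[
g(F(K)) \;=\; \tfrac{1}{2}(n - r + 1) \;=\; \tfrac{1}{2}(-r + n + 1),
\]
which is the claimed formula.

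The only genuinely delicate point is justifying that $F(K)$ is a connected orientable surface so that the formula $\chi = 2 - 2g - b$ applies with $b=1$. Orientability is built into Seifert's algorithm because the discs inherit a consistent orientation from the plane and the bands, being attached at oriented smoothings, are half-twisted in a compatible way. Connectedness follows from the assumption that $K$ is a knot: the underlying $4$-valent projection is connected in the plane, so the graph whose vertices are Seifert circles and whose edges are crossings is connected, and therefore so is the surface obtained by thickening this graph. I expect the main obstacle, if any, to be this connectedness/orientability verification; once those are in hand, the proof reduces to the Euler-characteristic bookkeeping above.
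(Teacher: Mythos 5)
Your proof is correct: the Euler-characteristic count $\chi(F(K)) = r - n$ from the disc-and-band decomposition, combined with $\chi = 2 - 2g - b$ and $b = 1$ for a knot, yields the formula, and your attention to connectedness and orientability covers the only delicate points. The paper itself gives no argument here, deferring entirely to the reference \cite{VKC}; the computation you give is the standard one found there, so there is nothing to reconcile between the two approaches.
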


\begin{proof} See \cite{VKC}. \end{proof}

For any classical knot $K,$ there is a surface bounding that knot in the four-ball that is homeomorphic to the Seifert surface.  One constructs this surface by  pushing the Seifert
surface into the four-ball keeping it fixed along the boundary. A different description of
this surface as indicated in Figure~\ref{classicalcob}. We {\it perform a saddle point transformation at every crossing of the diagram.} The result is a collection of unknotted and unlinked curves.  We then bound each of these curves by discs (via deaths of circles) and obtain a surface 
$S(K)$ embedded in the four-ball with boundary $K.$ As the reader can easily see, the curves produced by the saddle transformations are in one-to-one correspondence with the Seifert circles for
$K$ and $S(K)$ is homeomorphic with the Seifert surface $F(K).$
Thus $g(S(K)) =(1/2)( -r + n +1).$  \\

We generalize the Seifert surface to a surface $S(K)$ for virtual knots $K$ by performing exactly these saddle moves at each classical crossing of the virtual knot.
View Figure~\ref{virtseifert} and Figure~\ref{vstevedoreseifert}.  The result is a collection of unknotted curves that are isotopic (by the first classical Reidemeister move) to curves with only virtual crossings. Once the first Reidemeister moves are performed, these curves are identical with the 
{\it virtual Seifert circles} obtained from the diagram $K$ by smoothing all of its classical crossings.
We can isotope these circles into a disjoint collection of circles and cap them with discs in the four-ball. The result is a virtual surface $S(K)$ whose boundary is the given virtual knot $K.$ We will use the terminology {\it virtual surface in the four-ball}
for this surface schema. In the case of a virtual slice knot, the knot bounds a virtual surface of genus zero. We have the following lemma.\\

\begin{lemma} Let $K$ be a virtual knot, then the virtual Seifert surface $S(K)$ constructed above
has genus given by the formula $$g(S(K)) = (1/2)(-r + n + 1)$$ where $r$ is the number of virtual Seifert
circles in the diagram $K$ and $n$ is the number of classical crossings in the diagram $K.$ \end{lemma}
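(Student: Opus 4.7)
The plan is to compute the genus of $S(K)$ via an Euler characteristic count, modeled on the classical proof (invoked in the previous lemma). The virtual surface $S(K)$ is assembled from exactly two kinds of pieces: one disc capping each virtual Seifert circle (after the saddle transformations turn the diagram into a disjoint collection of unknotted, merely virtually crossed curves, which are then isotoped apart and capped in the four-ball), and one half-twisted band at each classical crossing of $K$, corresponding to the oriented saddle performed at that crossing. Thus $S(K)$ is built from $r$ discs and $n$ bands, so
\[
\chi(S(K)) \;=\; r - n,
\]
since each disc contributes $+1$ and each band attached along a pair of arcs on the existing discs contributes $-1$.

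Next I would verify the three standing hypotheses needed to convert this Euler characteristic into a genus via $\chi = 2 - 2g - b$. First, orientability: since every saddle is an oriented saddle and the capping discs inherit an orientation from the virtual Seifert circles (which are themselves oriented by the oriented smoothings of $K$), the bands are glued compatibly and $S(K)$ is an oriented surface. Second, boundary components: because $K$ is a single knot, the original boundary consists of one component; the deaths cap off the newly created trivial circles produced by the saddles, leaving exactly $b = 1$ boundary component, namely $K$ itself. Third, connectedness: the bands connect the discs in the same pattern that the classical crossings connect the virtual Seifert circles of $K$, and since the diagram $K$ is connected this incidence graph is connected, so $S(K)$ is connected.

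Applying $\chi(S(K)) = 2 - 2g(S(K)) - 1$ to $\chi(S(K)) = r - n$ yields
\[
1 - 2g(S(K)) \;=\; r - n,
\]
which rearranges to the claimed formula $g(S(K)) = \tfrac{1}{2}(-r + n + 1)$.

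The only real subtlety, and the place I expect the main obstacle, is justifying that the virtual crossings appearing on the curves immediately after the saddles do not interfere with this count. The point to emphasize is that the virtual crossings are planar artifacts of the embedding of the underlying Gauss data: by the detour move the curves obtained after all saddles are isotopic, as virtual link diagrams, to a disjoint union of round unknotted circles in the plane, and these are precisely the $r$ virtual Seifert circles. Hence capping them by discs in the four-ball is unambiguous and contributes exactly $r$ to $\chi$, so the Euler characteristic computation is valid for virtual $K$ just as in the classical setting handled in the previous lemma.
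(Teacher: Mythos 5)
Your proof is correct: the Euler characteristic count ($r$ discs, $n$ bands, one boundary component, orientable and connected) is exactly the standard argument, and your handling of the virtual crossings via the detour move is the right justification for why the classical computation carries over. The paper itself gives no inline proof --- it simply cites \cite{VKC} --- but the surrounding discussion (the cobordism surface $S(K)$ being homeomorphic to the Seifert surface built from the $r$ virtual Seifert circles and $n$ saddle bands) is precisely the construction you analyze, so your argument fills in the intended proof rather than departing from it.
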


\begin{proof} See \cite{VKC}. \end{proof}

\noindent{\bf Remark.} Note that it follows from the above discussion that if a diagram $K'$ is obtained from a diagram $K$ by replacing a crossing in $K$ by its oriented smoothing, then $K'$ is cobordant to $K$ via a single 
saddle point move. We will use this observation repeatedly in the rest of the paper.\\

\noindent{\bf Remark.} For the virtual stevedore in Figure~\ref{vstevedoreseifert}
 there is a lower genus surface (genus zero as we have already seen in Section 2) than can be produced by cobordism using the virtual Seifert surface. In that same figure we have illustrated a diagram $D$ with the same projected diagram as the virtual stevedore, but $D$ has all positive crossings. In this case we can prove that there is no virtual surface for this diagram  $D$ of four-ball genus less than $1.$ 
In fact, we have the following result. This theorem is a generalization of a corresponding result for classical knots due to Rasmussen \cite{Ras}.\\

\begin{theorem} [On Four-Ball Genus for Positive Virtual Knots  \cite{DKK}] Let $K$ be a positive virtual knot (i.e. all classical crossings in $K$ are positive), then the four-ball genus $g_{4}(K)$ is given by the formula
$$g_{4}(K) = (1/2)(-r + n + 1) = g(S(K))$$ where $r$ is the number of virtual Seifert circles in the diagram
$K$ and $n$ is the number of classical crossings in this diagram. In other words, the virtual Seifert surface for $K$ represents its minimal four-ball genus.\end{theorem}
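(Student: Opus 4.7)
The plan is to establish the equality by proving the two inequalities separately. The upper bound $g_{4}(K) \le g(S(K)) = (1/2)(-r + n + 1)$ is immediate from the preceding lemma: the virtual Seifert surface constructed by performing an oriented saddle at every classical crossing of $K$ and capping the resulting virtual unknots with discs is a legitimate virtual cobordism from $K$ to the empty link, and its schema genus is exactly $(1/2)(-r + n + 1)$. So the whole content of the theorem lies in the lower bound, where positivity must be used in an essential way, since a priori $g_{4}(K)$ could be strictly smaller than $g(S(K))$ (as already happens with the virtual stevedore once one allows negative crossings).

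My approach to the lower bound would be to develop a virtual analogue of the Rasmussen invariant $s(K)$ coming from a Lee-type deformation of virtual Khovanov homology and to establish the slice-genus inequality $|s(K)| \le 2 g_{4}(K)$ in the virtual category. First I would set up Manturov-style Khovanov homology for virtual link diagrams and its Lee deformation, verifying that the total differential squares to zero in spite of the sign subtleties introduced by source/sink resmoothings at virtual crossings. Second, I would track the canonical Lee generators associated with the oriented resolution and define $s(K)$ as the average of the quantum gradings of the two canonical classes, exactly as in Rasmussen's original construction. Third, I would check that each of the elementary cobordism moves (birth, death, saddle, Reidemeister, detour) changes $s$ by at most $2$ in absolute value per handle, which directly yields the slice-genus bound.

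With the invariant in hand, the positive case is the clean computation. For a diagram $K$ in which every classical crossing is positive, the oriented resolution coincides with the all-$A$ (or all-$B$, depending on convention) resolution up to a shift determined only by the writhe, and so the canonical Lee generators live at the extremal quantum grading dictated by the formula; a direct bookkeeping gives $s(K) = n - r + 1 = 2g(S(K))$. Combining this with the general inequality $s(K) \le 2 g_{4}(K)$ from the previous step and the upper bound from the Seifert surface construction then forces equality in the theorem.

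The main obstacle, and the reason the result is attributed to the joint work \cite{DKK}, is the construction and verification of the virtual Lee homology and the $s$-invariant in a setting where diagrams cannot in general be drawn on $S^{2}$: one must check carefully that Frobenius structure, Lee's perturbation and the behavior under saddle moves are all compatible with the parity/orientation data at virtual crossings, and that the invariant does not depend on the diagrammatic choices made in resolving a virtual isotopy. Once that foundation is in place, the positivity argument is essentially the same as Rasmussen's, but the foundational work is substantial and is the real content of the theorem.
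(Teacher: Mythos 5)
Your outline matches the paper's own account: the paper does not prove this theorem internally but attributes it to \cite{DKK}, noting in the accompanying remark that the proof uses Manturov's generalization of integral Khovanov homology to virtual knots, reformulated in \cite{DKK} to carry a Lee deformation and hence a Rasmussen-type $s$-invariant, combined with the Seifert-surface upper bound from the preceding lemma. Your proposal is exactly this strategy, so it is correct and takes essentially the same route.
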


\noindent{\bf Remark.} This theorem is proved by using a generalization of integral Khovanov homology to virtual knot theory originally devised by Manturov \cite{MBook}. In \cite{DKK} we reformulate this theory and show that it generalizes to the Lee homology theory (a variant of Khovanov homology) as well. With this theorem, we know the genus for an infinite class of virtual knots and can begin the deeper exploration of genus for non-positive virtual knots and links.\\

\begin{figure}
     \begin{center}
     \begin{tabular}{c}
     \includegraphics[width=5cm]{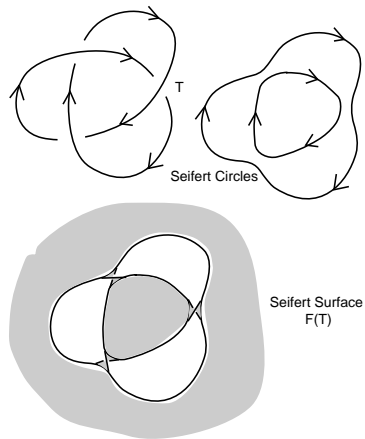}
     \end{tabular}
     \caption{\bf Classical Seifert Surface}
     \label{seifert}
\end{center}
\end{figure}

\begin{figure}
     \begin{center}
     \begin{tabular}{c}
     \includegraphics[width=5cm]{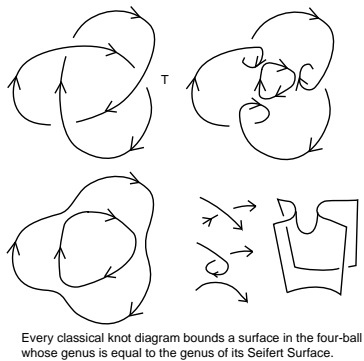}
     \end{tabular}
     \caption{\bf Classical Cobordism Surface}
     \label{classicalcob}
\end{center}
\end{figure}

\begin{figure}
     \begin{center}
     \begin{tabular}{c}
     \includegraphics[width=5cm]{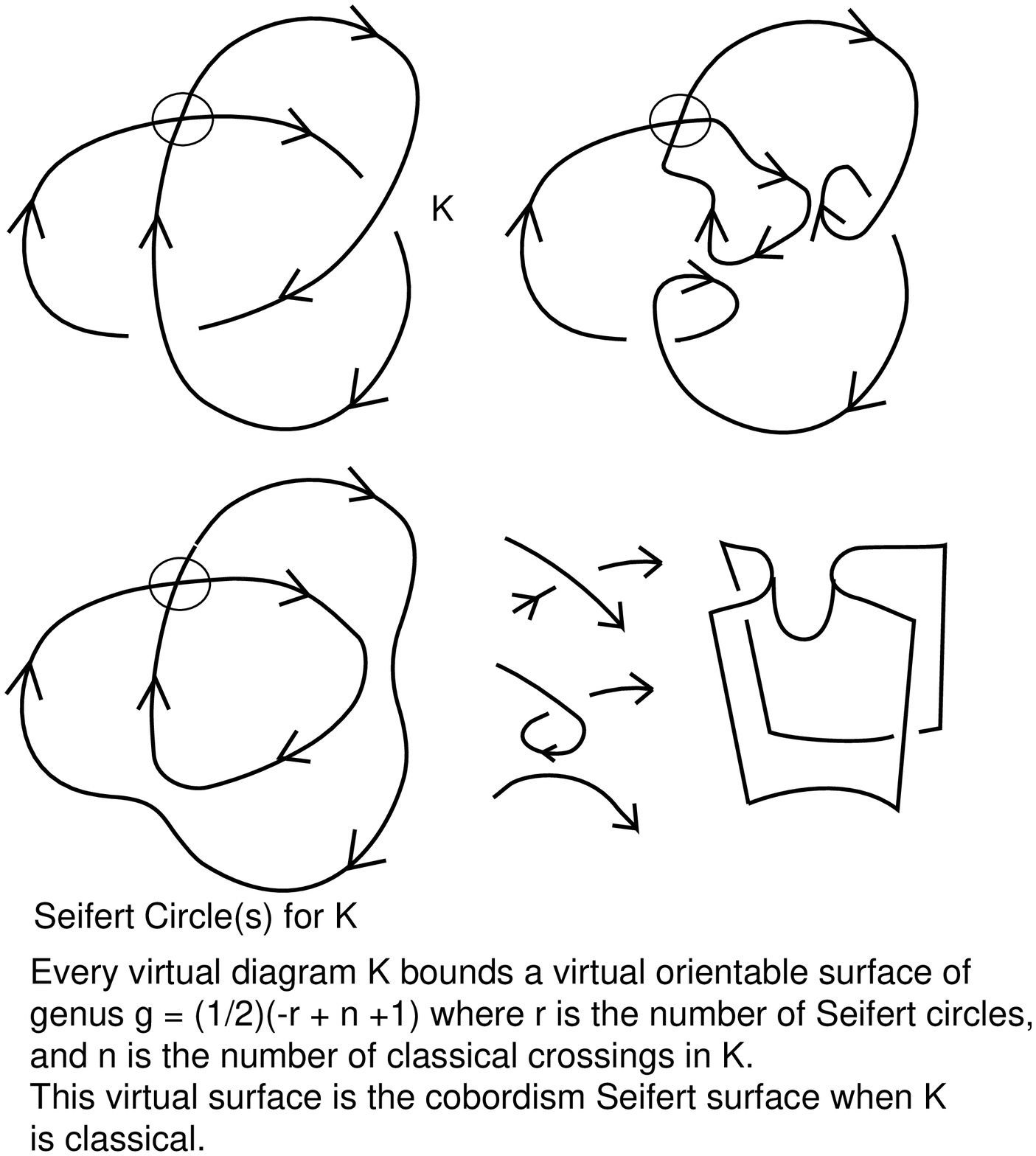}
     \end{tabular}
     \caption{\bf Virtual Cobordism Seifert Surface}
     \label{virtseifert}
\end{center}
\end{figure}

\begin{figure}
     \begin{center}
     \begin{tabular}{c}
     \includegraphics[width=5cm]{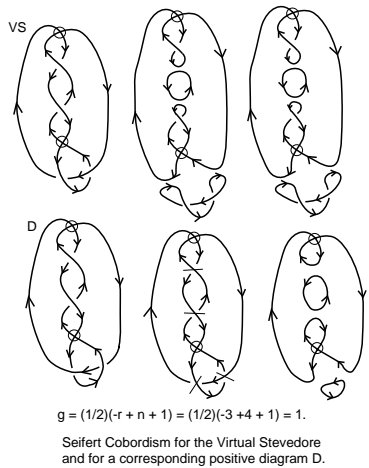}
     \end{tabular}
     \caption{\bf Virtual Stevedore Cobordism Seifert Surface}
     \label{vstevedoreseifert}
\end{center}
\end{figure}

\section {The Affine Index Polynomial Invariant}
The purpose of this section is to show that the affine index polynomial invariant \cite{Affine} of virtual knots is a concordance invariant (see Definition 3.1), and to extend this invariant and its
properties to virtual links. To this purpose, we begin by reviewing the definition of the affine index polynomial and recall its basic properties. We use the diagrammatic point of view in this section and do not
use Gauss codes for the definitions and constructions.\\

We first describe how to calculate the affine index polynomial, then prove invariance under virtual link equivalence, and then prove concordance invariance.
Calculation begins with a flat oriented virtual knot diagram  (the classical crossings in a flat diagram do not have choices made for over or under). An {\it arc} of a flat diagram is an edge of the $4$-regular graph that represents the diagram. An edge extends from one classical crossing to the next in orientation order. An arc may have many virtual crossings, but it begins at a classical crossing and ends at another classical crossing. We label each arc $c$ in the diagram with an integer $\lambda(c)$ so that an arc that meets  a classical crossing and crosses to the left increases the label by one, while an arc that meets a classical crossing and crosses to the right decreases the label by one. See Figure~\ref{example1} for an illustration of this rule. Such integer
labeling can always be done for any virtual or classical link diagram \cite{Affine}. In a virtual diagram the labeling is unchanged at a virtual crossing, as indicated in Figure~\ref{example1}. One can start by choosing some arc to have an arbitrary integer label, and then proceed along the diagram labeling all the arcs via this crossing rule.
We call such an integer labeling of a diagram an {\it affine labeling} of the diagram and sometimes just a {\it labeling} of the diagram. In \cite{Affine} we use the equivalent term {\it Cheng labeling} for the affine labeling.\\

\begin{figure}
     \begin{center}
     \includegraphics[width=6cm]{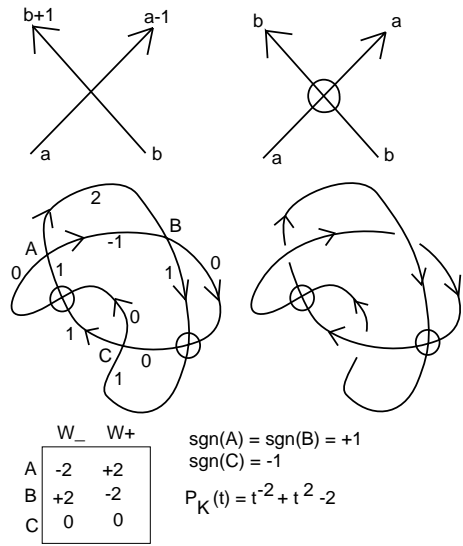}
     \caption{\bf Labeled Flat Crossing and an Example}
     \label{example1}
\end{center}
\end{figure}

\begin{figure}
     \begin{center}
     \includegraphics[width=5cm]{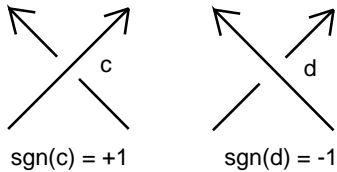}
     \caption{\bf Crossing Signs}
     \label{crossingsign}
\end{center}
\end{figure}

\noindent {\bf Remark.}  We discuss the algebraic background to this invariant in \cite{Affine}.
Once we have a labeled flat diagram, we assign two {\it weights} , $W_{+}$ and $W_{-}$ to each of its crossings according to the definition below.
Then given a diagram with classical crossings $j$ we assign a weight $W(j)$ to be $W_{+}$ if $c$ is a positive classical crossing, and $W_{-}$ if $j$ is a negative classical crossing.\\

\begin{definition} Given a labeled flat diagram we define two numbers at each classical crossing: $W_{-}$ and 
$W_{+}$ as shown in Figure~\ref{example1}. If we have a labeled classical crossing with left incoming arc $a$ and 
right incoming arc $b$ then the right outgoing arc is labeled $d= a - 1$ and the left outgoing arc is labeled
$c= b+1$ as shown in Figure~\ref{example1}. We then define
$W_{+} = a -( b +1)$ and $W_{-} = b - (a -1).$
Note that $W_{-} = - W_{+}$ in all cases.\end{definition}

\begin{definition} Given a crossing $c$ in a diagram $K,$ we let $sgn(c)$ denote the sign of the crossing.
The sign of the crossing is plus or minus one according to the convention shown in 
Figure~\ref{crossingsign}.
The {\it writhe}, $wr(K),$  of the diagram $K$ is the sum of the signs of all its crossings.
For a virtual link diagram, labeled in the integers according to the scheme above, and a crossing 
$c$ in the diagram, define {\it the weight of the crossing} $W_{K}(c)$ by the equation
$$W_{K}(c) = W_{sgn(c)}(c)$$ where $W_{sgn(c)}(c)$ refers to the underlying flat diagram for $K$. Thus $W_{K}(c)$ is $W_{\pm}(c)$ 
according as the sign of the crossing is plus or minus. {\it We shall often indicate the weight of a crossing $c$ in a knot diagram $K$ by $W(c)$ rather than $W_{K}(c).$}\end{definition}

\noindent{\bf Remark.} Note that in Figure~\ref{example1} we have flat crossings $A,B,C$ and corresponding crossings in the virtual knot $K.$ The Figure illustrates that 
$W_{K} (A) = -2, W_{K}(B) = +2, W_{K}(C) = 0.$\\

\begin{definition} Let $K$ be a virtual knot diagram. Define the {\it Affine Index Polynomial of $K$} by the equation
 $$P_{K} = \sum_{c} sgn(c)(t^{W_{K}(c)} - 1) = \sum_{c} sgn(c)t^{W_{K}(c)} - wr(K)$$ where the summation is over all classical crossings in the virtual knot diagram $K.$
The Laurent polynomial $P_{K}$
is an invariant of virtual knots, as we shall recall below, and we shall show that it is a concordance invariant.
Note that we can rewrite this definition as follows:
$$P_{K} = \sum_{n=1}^{\infty} wr_{n}(K) (t^{n} -1) $$
where $$wr_{n}(K) = \sum_{c: W_{K}(c) = n}sgn(c).$$ \end{definition}

We can think of these numbers $wr_{n}(K)$ as special writhes for the virtual knot diagram, similar in spirit to the odd writhe. Each $wr_{n}(K)$ for $n=1,2,\cdots$ is an invariant of the virtual knot $K.$
Note also that a crossing $c$ in $K$ is odd (by our previous definition) if and only if $W_{K}(c)$ is odd. Thus, if $J(K)$ denotes the odd writhe of $K,$ then 
$$J(K) = \sum_{c: W_{K}(c) \, odd}sgn(c) = \sum_{n \, odd} wr_{n}(K).$$

\noindent{\bf Remark.} We define the {\it Flat Affine Index Polynomial}, $PF_{K}$, for a flat virtual knot $K$ (in  a flat virtual link the classical crossings are immersion crossings, neither over not
under, Reidemeister moves are allowed independent of over and under, but virtual crossings still take detour precedence over classical crossings \cite{VKT}) by the formula
$$PF_{K}(t) =\sum_{c} (t^{|W_{K}(c)|} + 1)$$ where the polynomial is taken over the integers modulo two, but the exponents (the absolute values of the weights at the crossings) are integral.
It is not hard to see that $PF_{K}(t)$ is an invariant of flat virtual knots, and that the concordance results of the present paper hold in the flat category for this invariant. These results will be a subject of a separate paper.\\
 
\noindent {\bf Remark.} In Figure~\ref{example1} we show the computation of the weights for a given flat diagram and the computation of the polynomial for a virtual knot $K$ with this underlying diagram. The knot $K$ is an example of a virtual knot with unit Jones polynomial. The polynomial $P_{K}$ for this knot has the value
$$P_{K} = t^{-2} + t^{2} - 2,$$ showing that this knot is not isotopic to a classical knot. \\

\subsection{Invariance of $P_{K}(t).$}
In order to show the invariance and well-definedness of  $P_{K}(t)$ we must first show the existence of affine labelings of flat virtual knot diagrams. We do this by showing that any virtual knot diagram $K$ that overlies a given flat diagram $D$ can be so labeled. \\

\begin{proposition} Any flat virtual knot diagram has an affine labeling. \end{proposition}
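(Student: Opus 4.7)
The plan is to construct the labeling directly by traversal, and then verify global consistency by a local cancellation argument at each classical crossing.

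First I would fix an arbitrary arc $c_{0}$ of the flat diagram $D$ and set $\lambda(c_{0}) = 0$. Since $D$ is a knot diagram (one component), the arcs of $D$ are cyclically ordered by the orientation: starting at $c_{0}$, one encounters a sequence of arcs $c_{0}, c_{1}, c_{2}, \ldots, c_{N-1}, c_{N} = c_{0}$, where each transition $c_{i} \to c_{i+1}$ occurs at a single classical crossing (virtual crossings do not break arcs, per the definition in the paper). I would then define $\lambda(c_{i+1})$ inductively using the propagation rule from Figure~\ref{example1}: at the classical crossing where $c_{i}$ ends and $c_{i+1}$ begins, set $\lambda(c_{i+1}) = \lambda(c_{i}) + 1$ if $c_{i+1}$ is the left outgoing arc (the strand crosses to the left), and $\lambda(c_{i+1}) = \lambda(c_{i}) - 1$ if $c_{i+1}$ is the right outgoing arc.

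The only thing to check is that this definition is consistent, i.e.\ that upon returning to $c_{0}$ after $N$ steps one recovers $\lambda(c_{N}) = 0$. This is where the key local observation enters: at every classical crossing of $D$, with left/right incoming arcs labeled $a,b$ and left/right outgoing arcs labeled $c = b+1$, $d = a-1$, one of the two through-strands is crossing to the left (contributing $+1$ to the propagation) and the other is crossing to the right (contributing $-1$). Since $D$ has one component, the traversal passes through each classical crossing exactly twice, once along each of its two strands. Hence the total increment $\sum_{i=0}^{N-1}(\lambda(c_{i+1}) - \lambda(c_{i}))$ collects a $+1$ and a $-1$ from each crossing, summing to zero. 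This yields $\lambda(c_{N}) = \lambda(c_{0}) = 0$, so the labeling is well defined on all arcs.

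The main obstacle, if there is one, is simply verifying cleanly that the two strands at a flat crossing always produce opposite contributions; this is purely a matter of the orientation conventions in Figure~\ref{example1}, and amounts to noting that the relation $W_{+} + W_{-} = 0$ from Definition~4.1 (equivalently, $c + d = a + b$) is exactly the statement that the two local label changes are negatives of each other. Once this is observed, global consistency on the knot is automatic, and the proposition follows. I would also remark that the same argument shows the labeling is unique up to the choice of initial constant $\lambda(c_{0})$, and that the argument fails in general for multi-component links precisely because a crossing between two distinct components is traversed only once by each, contributing to each component's net change separately; this explains the need for the hypothesis in the link generalization discussed later in the section.
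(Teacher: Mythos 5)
Your argument is correct and is essentially the same as the paper's (which defers the details to the affine index polynomial paper but sketches exactly this point): define the labeling by traversal and observe that each classical crossing is passed twice by the single component, contributing a canceling $+1$ and $-1$, so the label returns to its starting value. Your closing remark about why the argument fails for multi-component diagrams also matches the compatibility discussion that follows in the paper.
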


\begin{proof} This proposition is proved in \cite{Affine}. The main point is that on traversing the entire diagram, one goes through each crossing twice. The combination of these two operations results in a total change of zero. Hence, whatever label one begins with, the return label after a complete circuit of a diagram component will be the same as the start label. \end{proof}

\begin{definition} Not all multi-component virtual diagrams can be labeled. See Figure~\ref{nolabel} for such an example. We call a multi-component diagram $D$ {\it compatible} if every component of the diagram
has algebraic intersection number zero (taking signed intersection numbers in the plane) with the other components in $D.$ \end{definition}

\noindent We observe the following

\begin{lemma} Let $D$ be a multi-component virtual diagram. Then $D$ can be given an affine labeling if and only if it is compatible. \end{lemma}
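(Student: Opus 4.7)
\medskip
\noindent\textbf{Proof proposal.} The plan is to analyze how the label propagates as one traverses a single component of $D$, and to show that the only obstruction to a globally consistent affine labeling is the algebraic intersection number of that component with the rest of the diagram.

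First I would fix a component $A$ of $D$, pick any arc of $A$ and assign it an arbitrary integer label. Using the local rule of Definition from Figure~\ref{example1}, the label extends uniquely across each classical crossing encountered along $A$; virtual crossings impose no change. The labeling on $A$ is well-defined if and only if, after traversing $A$ once and returning to the starting arc, the net change in label is $0$. So the whole question is: compute this net change $\Delta(A)$.

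Next, I would decompose $\Delta(A)$ according to the type of classical crossing encountered. Each self-crossing $c$ of $A$ is traversed twice by the component; by inspection of Figure~\ref{example1}, one of the two passages changes the label by $+1$ (the strand ``crossing to the left'') and the other changes it by $-1$ (the strand ``crossing to the right''). Hence self-crossings of $A$ contribute $0$ to $\Delta(A)$. For a classical crossing $c$ between $A$ and another component $B$, the component $A$ is traversed through $c$ only once, contributing $\pm 1$ to $\Delta(A)$. The key local claim is that this $\pm 1$ equals (up to a global sign) the local algebraic intersection index of $A$ with $B$ at $c$: at a crossing where $A$'s tangent lies to the left of $B$'s and $B$ crosses to $A$'s left, the labeling rule gives $+1$, and the local sign of $(T_A,T_B)$ in the oriented plane agrees. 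Summing over all crossings along $A$ gives
\[
\Delta(A) \;=\; \pm\, A \cdot \Bigl(\bigcup_{B\neq A} B\Bigr),
\]
where the right-hand side is the algebraic intersection number of $A$ with the union of the other components in the plane.

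From this identity both directions of the lemma follow at once. If $D$ admits an affine labeling, then $\Delta(A)=0$ for every component $A$, so $A\cdot(\bigcup_{B\neq A} B)=0$; this is exactly the compatibility condition. Conversely, if $D$ is compatible, then $\Delta(A)=0$ for every component $A$, and choosing an arbitrary integer on a single arc of each component and extending by the local rule yields a globally consistent labeling, since any closed traversal of a component returns to its starting label.

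The main obstacle in writing this up cleanly is the local sign bookkeeping at a mixed crossing: one must verify directly from the diagram conventions in Figure~\ref{example1} and Figure~\ref{crossingsign} that the increment $\pm 1$ dictated by the ``crosses to the left/right'' rule is indeed $-$(local algebraic intersection sign of $A$ with $B$) at each mixed crossing, with the sign convention applied uniformly. Once that local verification is in hand, the rest is additive and immediate, and the example in Figure~\ref{nolabel} illustrates precisely a non-compatible two-component diagram where $\Delta(A)\neq 0$ and so no labeling exists.
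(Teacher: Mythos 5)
Your argument is correct and is essentially the same as the paper's proof: both decompose the net label change around a component into self-crossings (which cancel, being met once as an increment and once as a decrement) and external crossings (each met once, contributing according to its intersection sign), so that the traversal closes up if and only if the algebraic intersection number with the other components vanishes. The only difference is that you explicitly flag the local sign verification at mixed crossings, which the paper leaves implicit.
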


\begin{proof} In any traverse of a given component of $D$ one will meet  external crossings each once, and increment or decrement 
the labeling according as the crossing has positive or negative sign with respect to this component. Self-crossings are met twice, once as an increment and once as a decrement.
Thus the total traverse will not change the initial label if and only if the algebraic intersection number of the given component with the rest of the diagram is zero. Since this must hold for each component of the diagram
$D$, we conclude that $D$ can be labeled if and only if $D$ is compatible. \end{proof}

\begin{figure}
     \begin{center}
     \includegraphics[width=5cm]{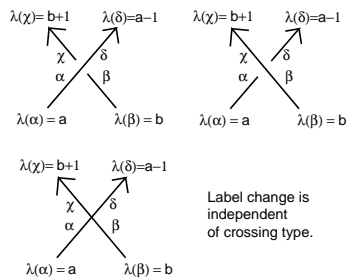}
     \caption{\bf Labels for Crossings}
     \label{lambda}
\end{center}
\end{figure}

\begin{figure}
     \begin{center}
     \includegraphics[width=3cm]{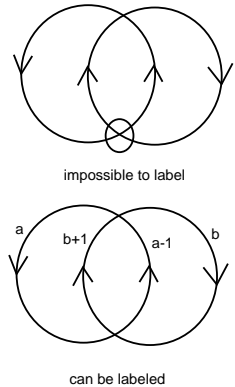}
     \caption{\bf Possible and Impossible Labels for Links}
     \label{nolabel}
\end{center}
\end{figure}

\noindent{\bf Remark.} If we follow the algorithm described in Figure~\ref{example1} to compute a labeling, using a different starting value, the resulting labeling will differ from the first labeling by a constant integer at every label. Since the polynomial is defined in terms of the differences $W_{\pm}(c)$ at each classical crossing $c$ of $K,$ it follows that the weights $W_{\pm}$ as described above are well-defined. We can now state a result about the weights. See \cite{Affine} for the proof.
Let $\bar{K}$ denote the diagram obtained by reversing the orientation of $K$ and let $K^{*}$ denote the diagram obtained by switching all the crossings of $K.$  $\bar{K}$ is called the {\it reverse} of $K,$ and $K^{*}$ is called the {\it flat mirror image} of $K.$ We let $K^{!}$ denote the {\it vertical mirror image of $K$} as shown in Figure~\ref{vertical}.\\

\noindent The following proposition and its proof will be mostly found in \cite{Affine} except for the statements about the vertical mirror image $K^{!}.$ These statements are easily seen from the discussion here and so 
we do not give a proof of this proposition here.

\begin{proposition} Let $K$ be a virtual knot diagram and $W_{\pm}(c)$ the crossing weights
as given in Definitions 4.1, 4.2 and 4.3. If $\alpha$  is an arc of $K,$ let $\bar{\alpha}$ denote the
corresponding arc of $\bar{K}$, the result of reversing the orientation of $K.$ 
\begin{enumerate}
\item Let $c$ be a crossing of $K$ and let $\bar{c}$
denote the corresponding crossing of $\bar{K}$, then $W(\bar{c}) = - W(c).$ 
Hence, 
$$P_{\bar{K}}(t) = P_{K}(t^{-1}).$$ Similarly, for the flat mirror image we have $$P_{K^{*}}(t) = -P_{K}(t^{-1}),$$
and for the vertical mirror image $$P_{K^{!}}(t) = -P_{K}(t).$$
Thus this invariant changes $t$ to $t^{-1}$ when the orientation of the knot is reversed, and it 
changes global sign and $t$ to $t^{-1}$ when the knot is replaced by its flat mirror image.
\item If $K$ is a classical knot diagram, then for each crossing $c$ in $K$, 
$W(c) = 0$ and  $P_{K}(t) = 0.$
\item If $K \sharp L$ denotes a connected sum (the diagrams are joined by removing an arc from each, and connecting them) of $K$ and $L,$ then 
$$P_{K \sharp L} = P_{K} + P _{L}.$$
Thus, if $K \sharp K^{!}$ denotes a connected sum of a virtual knot with its vertical mirror image (see Figure~\ref{vertical}), then it follows from the above that 
$$P_{K \sharp K^{!}} = P_{K} - P_{K^{!}} = 0.$$
\end{enumerate}
\end{proposition}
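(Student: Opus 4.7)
My plan is to derive all three identities of Part 1 by tracking how the local data $(a, b, c, d)$ at a crossing and the sign $sgn(c)$ transform under each of the three operations, and then to observe that the definition $P_K(t) = \sum_c sgn(c)(t^{W(c)} - 1)$ is linear in these crossing-level ingredients, so the global formulas follow automatically. For $\bar K$, reversing both strand orientations at a crossing swaps incoming with outgoing arcs but leaves their left/right positions at the crossing untouched. Hence if in $K$ the crossing has left-incoming $a$, right-incoming $b$, left-outgoing $c = b+1$, and right-outgoing $d = a-1$, the corresponding crossing of $\bar K$ has new left-incoming $c = b+1$ and new right-incoming $d = a-1$. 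The weight formula then gives $W_{+}(\bar c) = c - (d+1) = (b+1) - a = -W_{+}(c)$, and $sgn(c)$ is unchanged because simultaneously reversing both strands preserves the sign of a crossing, so substitution into the definition yields $P_{\bar K}(t) = P_{K}(t^{-1})$. For $K^{*}$ and $K^{!}$ I would run exactly the same kind of local analysis and identify the resulting transformation of $(W(c), sgn(c))$: for $K^{*}$ both the weight negates and the sign flips, giving $P_{K^{*}}(t) = -P_{K}(t^{-1})$; for $K^{!}$ the weight is preserved while the sign flips, giving $P_{K^{!}}(t) = -P_{K}(t)$. The compatibility $K^{!} = \overline{K^{*}}$, which yields $P_{K^{!}}(t) = P_{K^{*}}(t^{-1}) = -P_{K}(t)$, serves as a useful internal consistency check.

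For Part 2 I would build an affine labeling of a classical diagram from the Alexander numbering of its planar regions: assign each arc the Alexander number of the region on its right, and check that this integer assignment satisfies the crossing rule of Definition 4.1. At every classical crossing the top and bottom regions (the two regions between the two crossing strands) carry the same Alexander number; the right-hand region of the left-incoming arc and the right-hand region of the left-outgoing arc are precisely this pair of regions, so $a = c$ and hence $W_{+}(c) = a - c = 0$, and similarly $W_{-}(c) = 0$. Since the weights $W_{\pm}(c)$ are defined by differences of labels and so are independent of the additive constant in any affine labeling, their vanishing in this one labeling forces $W_{\pm}(c) = 0$ in every labeling, whence $P_{K}(t) \equiv 0$ for classical $K$.

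For Part 3 I would take affine labelings of $K$ and $L$ separately, shift one of them by a constant integer so that the two arcs spliced together receive matching labels, and check that the result is a valid affine labeling of $K \sharp L$. Since the classical crossings of $K \sharp L$ are exactly those of $K$ together with those of $L$, each retaining its original sign and weight, the definition of $P$ gives $P_{K \sharp L}(t) = P_{K}(t) + P_{L}(t)$ directly. The final statement $P_{K \sharp K^{!}}(t) = 0$ is then immediate from Part 1 via $P_{K^{!}}(t) = -P_{K}(t)$.

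The subtlest step will be fixing the convention for $K^{*}$: one must pin down precisely what \emph{switching all the crossings} does to the incoming/outgoing and left/right labels at each crossing (and how the over/under data interact) in order to verify simultaneously the weight-negation $W(c^{*}) = -W(c)$ and the sign-flip $sgn(c^{*}) = -sgn(c)$. Once this local transformation is written down, every identity of the proposition reduces to a line-by-line substitution into the definition of $P_{K}(t)$, and the passage from crossings to the whole diagram is automatic from linearity.
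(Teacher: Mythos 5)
The paper itself does not prove this proposition --- it defers to \cite{Affine} and declares the $K^{!}$ statements ``easily seen'' --- so a direct local verification like yours is the right kind of argument. Parts 2 and 3 are sound as written: the Alexander-numbering construction of an affine labeling of a classical diagram (label each arc by the index of the region on its right) does satisfy the crossing rule and forces $W_{\pm}(c)=0$, and the shift-and-splice argument for connected sums is exactly what is needed, since the weights depend only on label differences.

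There is, however, a genuine flaw in your treatment of $\bar K$. The premise that reversing both strands ``leaves their left/right positions at the crossing untouched'' is false: left and right are defined relative to the direction of travel, so under orientation reversal they swap --- the arc that was left-outgoing in $K$ (label $c=b+1$) becomes the \emph{right}-incoming arc of $\bar K$, and the old right-outgoing arc (label $d=a-1$) becomes the new left-incoming one. Moreover, with the correct identification one finds that the original labels do not satisfy the crossing rule on $\bar K$ at all: the new left-incoming arc carries label $d$ and continues to the new right-outgoing arc, whose old label is $a=d+1$, whereas the rule demands $d-1$. One must therefore replace the labeling $\lambda$ by $-\lambda$, which \emph{is} a valid affine labeling of $\bar K$, before computing weights. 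Your two oversights cancel exactly --- swapping the two incoming labels and negating both leaves $W_{+}=L-(R+1)$ unchanged --- which is why you nonetheless land on the correct value $W_{+}(\bar c)=(b+1)-a=-W_{+}(c)$; but as written the derivation rests on a false local picture. A smaller point: the ``consistency check'' $K^{!}=\overline{K^{*}}$ is not justified (switching all crossings and reflecting the plane while retaining over/under data produce different Gauss codes in general), so it should not be leaned on; the direct computation you sketch for $K^{!}$ (weight preserved, sign flipped at every crossing) is correct and should carry that case on its own.
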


We will now state the invariance of $P_{K}(t)$ under virtual isotopy. The reader will recall that
virtual isotopy consists in the classical Reidemeister moves plus virtual moves that are all generated by one generic detour move. The (unoriented) virtual isotopy moves are illustrated in Figure~\ref{Figure 1} and Figure~\ref{Figure 2}. In Figure~\ref{r12} and Figure~\ref{r3} we show the relevant information for verifying that  $P_{K}(t)$ is an invariant of oriented virtual isotopy. The reader can find the details of this proof for virtual knots in \cite{Affine}.\\

\begin{theorem} Let $K$ be a virtual knot diagram. Then the polynomial $P_{K}(t)$
is invariant under oriented virtual isotopy and is hence an invariant of virtual knots.
\end{theorem}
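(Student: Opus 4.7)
The plan is to verify invariance move by move against the list of generators of oriented virtual isotopy, exploiting the fact that the weights $W_{K}(c)$ are defined as differences of arc labels and are therefore unaffected by a global shift of the affine labeling. First I would record the basic observation that labels are preserved across virtual crossings, so the purely virtual Reidemeister moves and the detour move change neither the set of classical crossings, their signs, nor the values $W_{K}(c)$. Consequently the sum $\sum_{c} \mathrm{sgn}(c)(t^{W_{K}(c)}-1)$ is untouched, and it remains to analyze the three classical Reidemeister moves, together with the mixed move that slides a strand over or under a virtual crossing (which likewise changes no label).

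For RI, I would analyze the curl directly: the two incoming arcs at the new crossing come from the same strand that loops back onto itself without meeting any other crossing, so the two labels differ by exactly the increment contributed by the crossing itself. A quick check of the four orientations in Figure~\ref{example1} gives $W_{\pm}=0$ at the curl crossing, and hence $(t^{W_{K}(c)}-1)=0$. For RII the two new crossings share the same pair of incoming and outgoing strands, so the labels on both sides are identical; thus the two crossings have the same absolute weight, and since they occur with opposite signs $\mathrm{sgn}(c_{1})=-\mathrm{sgn}(c_{2})$, their contributions cancel term by term. Here one must be careful to treat the parallel and antiparallel orientations separately, but in each case the calculation reduces to a local equality of labels.

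The main obstacle is RIII, since there one needs a bijection between the three crossings before and after the move such that the signs and the weights are preserved. My approach would be to fix an affine labeling of the RIII tangle with six generic incoming-arc labels $a,b,c$ (and derived outgoing labels obtained by the $\pm 1$ rule), compute the three labels that appear on each of the internal arcs in both the before and after diagrams, and verify that the three crossing weights $W_{\pm}$ agree pairwise under the natural matching of crossings. Once the oriented RIII has been handled in one representative variant (say all strands oriented upward), the remaining variants follow either by the detour move, which allows us to swing a strand around the sphere without introducing classical crossings, or by combining RIII with RII in the standard way.

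After these local verifications are assembled, the conclusion that $P_{K}(t)$ is constant on the virtual isotopy class of $K$ is immediate: every move in the generating set preserves the sum, so any finite composition does as well. In particular $P_{K}(t)$ descends to an invariant of virtual knots, with the same proof going through verbatim for any compatible link diagram by Lemma on affine labelability.
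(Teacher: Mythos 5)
Your move-by-move verification is exactly the argument the paper relies on: it defers the details to the affine index polynomial paper \cite{Affine}, and Figures~\ref{r12} and~\ref{r3} record precisely the RI/RII/RIII weight bookkeeping you describe (labels inherited uniquely, curl weight zero, RII pair canceling, RIII weights permuted), with the virtual and detour moves handled by the observation that labels pass unchanged through virtual crossings. One small point of care: for the RII cancellation you need the two crossings to have literally equal weights $W_{K}(c_{1})=W_{K}(c_{2})$, not merely equal absolute weights, and the local label computation does give exact equality because the definition $W_{K}(c)=W_{\mathrm{sgn}(c)}(c)$ together with $W_{-}=-W_{+}$ is arranged precisely so that the oppositely signed crossings of an RII pair carry the same exponent.
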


\begin{proof} See \cite{Affine}. \end{proof}

\begin{figure}
     \begin{center}
     \includegraphics[width=5cm]{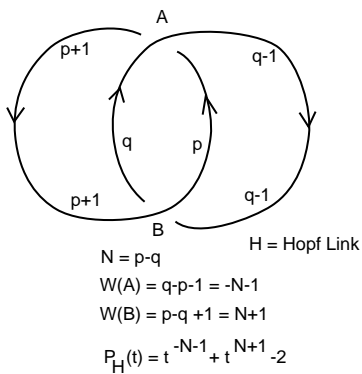}
     \caption{\bf Invariant for the Hopf Link}
     \label{hlink}
\end{center}
\end{figure}

\begin{figure}
     \begin{center}
     \includegraphics[width=5cm]{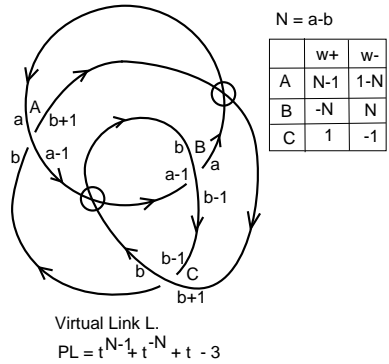}
     \caption{\bf Affine Index Invariant of a Virtual Link}
     \label{vlink}
\end{center}
\end{figure}

\begin{figure}
     \begin{center}
     \includegraphics[width=5cm]{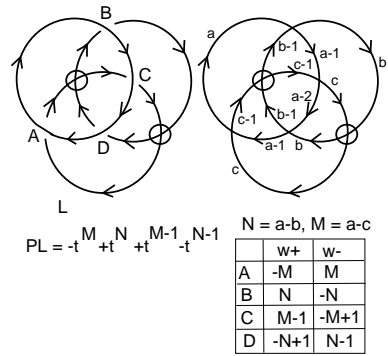}
     \caption{\bf Affine Index Invariant of a Virtual Borromean Rings}
     \label{vboro}
\end{center}
\end{figure}

\begin{figure}
     \begin{center}
     \includegraphics[width=5cm]{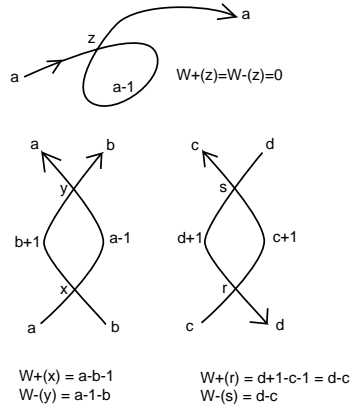}
     \caption{\bf Reidemeister Moves II and II}
     \label{r12}
\end{center}
\end{figure}

\begin{figure}
     \begin{center}
     \includegraphics[width=5cm]{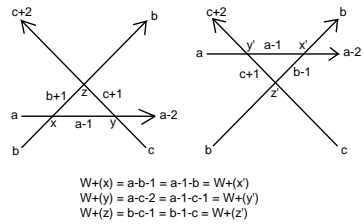}
     \caption{\bf Reidemeister Move III}
     \label{r3}
\end{center}
\end{figure}

\begin{figure}
     \begin{center}
     \begin{tabular}{c}
     \includegraphics[width=6cm]{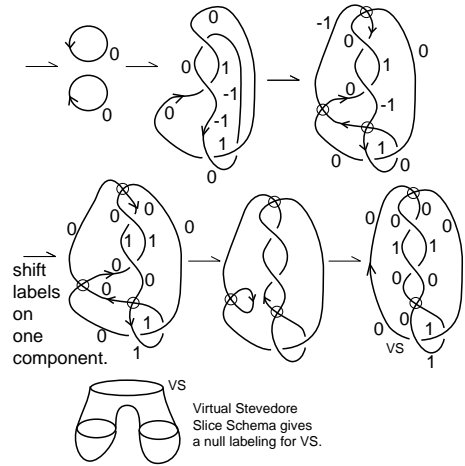}
     \end{tabular}
     \caption{\bf Virtual Stevedore has a Null Labeling}
     \label{labelvstevedore}
\end{center}
\end{figure}

\begin{figure}
     \begin{center}
     \begin{tabular}{c}
     \includegraphics[width=5cm]{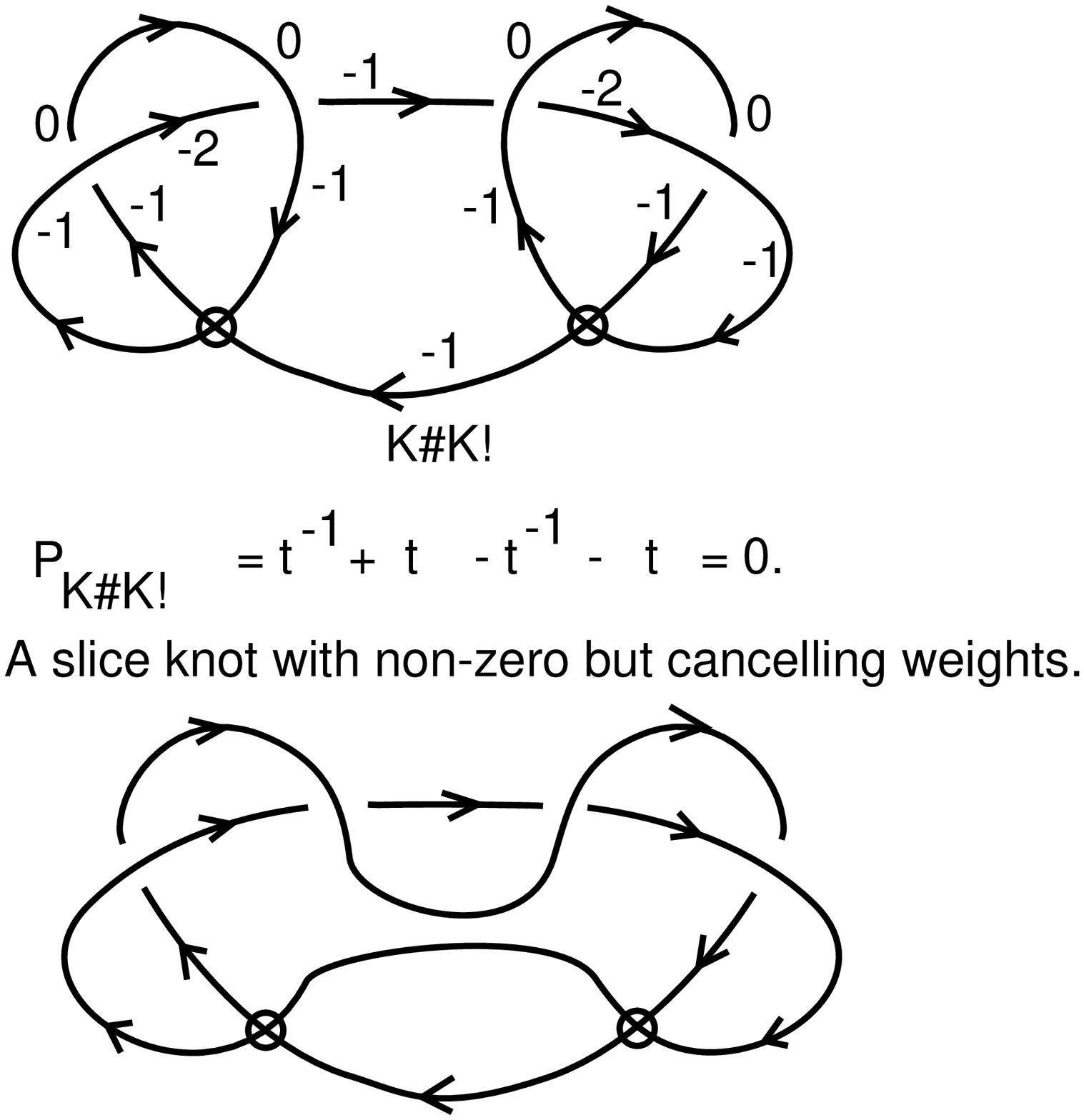}
     \end{tabular}
     \caption{\bf A Virtual Slice Knot with Non-Zero but Canceling Weights}
     \label{consum}
\end{center}
\end{figure}

\noindent {\bf Generalization of the Affine Index Polynomial from Knots to Links.} We are now in a position to generalize the invariant $P_{K}(t)$ to  cases of virtual and classical link diagrams. 
Some of the material in this discussion can be found in embryonic form in \cite{Affine}. Special
link diagrams can be affine colored according to our rules. For example, view Figure~\ref{hlink} to see a labeling of the classical Hopf link. Before analyzing this figure, consider the proof for 
the invariance of the polynomial $P_{K}(t).$ Affine coloring is uniquely inherited under Reidemeister
moves and the weights at the three crossings of the third Reidemeister move are permuted under the 
move. See Figure~\ref{r12} and Figure~\ref{r3}. These properties are true for the polynomial that we write for any affine-colored link.
Thus we can conclude that {\it if we are given pair $(L,C)$ where $L$ is a link diagram and $C$ is an 
affine-coloring of this diagram, then the polynomial $P_{L}(t)$, defined just as before, is an invariant
of the pair $(L,C)$} where a Reidemeister move applied to $(L,C)$ produces $(L',C')$ where $L'$ is the 
diagram obtained from $L$ by the move, and $C'$ is the coloring uniquely obtained from $C$ by the move. The resulting polynomial is an invariant of the link itself.
\\

Now go back to Figure~\ref{hlink} and note that we have given arbitrary labels $p$ and $q$ to arcs on the two components and obtained weights of the form 
$-N -1$ and $-N+1$ where $N= p-q.$ If we regard $N$ as an integer variable in the polynomial $P_{H} = t^{-N-1} + t^{-N + 1} -2$ (H is a positive Hopf link), then
this polynomial is an invariant of the link. This can be verified by applying Reidemeister moves to the link and showing that the value of $N$ is preserved.\\

In working with the invariant for  a link
we choose an {\it algebraic} starting value for each component of the link, using a different algebraic symbol for each component. It is convenient 
in displaying the weights to use new variables corresponding to the differences between algebraic labels. Thus in Figure~\ref{vlink}  we have a two component virtual link with labels $a$ and $b$ for each 
component and we define $N = a-b.$  At a crossing between two components the weights will be expressed uniquely in terms of $N$ (the difference between their algebraic labels). The invariant polynomial for the link has algebraic exponents involving these differences. In Figure~\ref{vlink} the polynomial is 
$P_{K} = t ^{N-1} + t^{-N} + t -3.$ \\

In Figure~\ref{vboro} we illustrate a link $L$ that is a virtual Borromean rings. No two components are linked but the triple is linked. The algebraically weighted affine index polynomial detects the linkedness
of these rings. Note that in this case we have two algebraic exponents $N$ and $M.$ We leave it to the reader to examine Figure~\ref{vboro} for more details about this example.\\

We have defined {\it compatibility}  (Definition 4.4) of multi-component diagrams above and proved that a multi-component diagram can be affine labeled if and only if it is compatible (Lemma 4.5).
Therefore compatible links have affine index polynomials. Just as we have remarked, such polynomials will in general have exponents that are new variables and that can be specialized to 
polynomials of labeled pairs. It is useful to have both the absolute link invariants and the labeled pair invariants. We shall use both types of invariant in the discussion to follow.\\

\subsection{Concordance Invariance of the Affine Index Polynomial}
The main result of this section is the \\

\begin{theorem} The affine index polynomial $P_{K}(t)$ is a concordance invariant of virtual knots $K$ and compatible virtual links (the links for which the invariant is defined). In the case of links we use 
integral affine labelings for the link, just as in the case of knots. For links, the genus zero concordance is restricted to one where all critical points can be paired in canceling maxima and saddles and canceling saddles and
minima. Note that this condition is automatically satisfied in the case of concordance of knots.
\end{theorem}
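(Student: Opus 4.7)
My strategy is to check invariance of $P$ under each of the elementary moves generating a concordance. Invariance under virtual isotopy is Theorem~4.7. A birth introduces an isolated unknotted circle $U$ with no classical crossings: the affine labeling extends by assigning $U$ any integer label, leaving every pre-existing weight $W(c)$ unchanged, so $P$ is preserved. Deaths are handled identically. The nontrivial case is the saddle move, to which the rest of the argument is devoted.

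A saddle reconnects two parallel, oppositely-oriented arc segments without altering the set of classical crossings. If these segments carry integer labels $a$ and $b$, a direct check from the labeling rule of Definition~4.1 shows that the pre-saddle labeling extends unchanged to the post-saddle diagram precisely when $a = b$; I call such a saddle \emph{null}. A null saddle preserves every weight $W(c)$ and hence preserves $P$ on the nose.

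The heart of the argument is to show that every saddle in a genus-zero concordance can be arranged to be null. I use the pairing hypothesis in the theorem statement: each saddle is paired either with a preceding birth or with a following death. In a birth--saddle pair, the birth produces $U$ whose label is an arbitrary integer; I set it equal to the label at the saddle site on the main diagram, forcing the paired saddle to be null. In a saddle--death pair, the saddle splits off an isolated unknot $U'$ with no classical crossings; the pre-image of $U'$ inside the pre-saddle diagram consists of the two parallel arcs joined at one end by a crossing-free cap. Since a crossing-free arc has a constant label, the two parallel arcs necessarily carry the same label, so this saddle is null automatically. Either way, $P$ is unchanged across the pair.

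The extension to compatible links rests on two facts. First, the affine labeling of a compatible link is unique only up to an independent integer shift on each component. Second, by the compatibility hypothesis, the polynomial $P$ is itself invariant under such shifts: each shift by $k$ on one component contributes $\pm k$ at each inter-component crossing, and these contributions sum to zero precisely because the algebraic intersection number between any two components vanishes. With these facts, the birth--saddle argument goes through verbatim in the link case, since a newborn circle introduces its own independent shift freedom. The main obstacle I anticipate is the careful verification that the Morse-theoretic pairing of critical points can be implemented globally throughout the concordance, synchronized with the labeling choices on newborn circles, and that the compatibility condition is preserved at every intermediate diagram so that an affine labeling actually exists at each stage.
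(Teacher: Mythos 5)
Your overall architecture is the same as the paper's: reduce a genus-zero concordance to birth--saddle and saddle--death pairs, arrange for the two labels at each saddle site to agree so that the saddle preserves the affine labeling (your ``null saddle''), and conclude that $P$ is unchanged across each pair. Your treatment of the birth--saddle pair matches the paper's, which shifts the labels of the newborn component by $a-b$ so that the saddle becomes null. However, two of your justifications have genuine gaps.

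First, in the saddle--death case you assert that the pre-image of the dying circle consists of the two saddle arcs ``joined at one end by a crossing-free cap,'' so that the two labels agree automatically. This assumes the death immediately follows the saddle. In a general concordance an arbitrary virtual isotopy intervenes between the saddle and the death, so at the moment the saddle is performed the nascent component can carry many classical crossings (self-crossings and crossings with the rest of the diagram), and the cap is not crossing-free. The conclusion is still correct, but for a different reason: the label difference $a-b$ at a saddle site on a single component equals the algebraic intersection number of the would-be split-off component with the remainder of the diagram, and this vanishes because that component is isotopic to an unknotted, unlinked circle. Equivalently (and this is how the paper argues), one works in the split position $K\sqcup C$, where $C$ has no crossings and carries a constant label, and transports the labeling through the isotopy using invariance of the polynomial of a labeled pair.

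Second, your claim that for compatible links $P$ is invariant under an independent integer shift of each component's labels, ``because the contributions $\pm k$ sum to zero,'' confuses exponents with summands. A shift of one component by $k$ changes the \emph{weight} at each inter-component crossing by $\pm k$, i.e.\ it replaces $t^{W(c)}$ by $t^{W(c)\pm k}$; even when the signed count of such crossings is zero, the resulting Laurent polynomial is generally different (e.g.\ two positive crossings of weight $0$ contribute $2$ before the shift and $t^{k}+t^{-k}$ after it, if the two shifts have opposite signs). Indeed the paper's link invariant is recorded with formal exponent variables $N=a-b$ precisely because the polynomial depends on these relative shifts. What your argument actually needs is weaker and true: the label on a circle at the moment of its birth is irrelevant to $P$ because that circle has no crossings there, and the polynomial of the labeled diagram is an isotopy invariant, so the shift you perform on the newborn component before its paired saddle does not change $P$. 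Restricting to that specific shift, rather than invoking a false general shift-invariance, repairs the link case.
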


\begin{proof} Suppose that $K$ is concordant to $K'.$ Then there is a genus zero sequence of births deaths and saddles connecting $K$ to $K'.$
Genus zero implies that the core structure of this sequence is a tree of saddles, births and deaths. The genus zero surface is constructed from a sequence of  pairings of births with saddles, and saddles with deaths. In other words, the basic operation that constructs the concordance consists in the splitting off from,  or amalgamation of a trivial knot with the body of the concordance via a birth and saddle, or a saddle and a death.
Thus we can consider an elementary genus zero concordance consisting in a virtual knot $K$ and a trivial circle $C,$ disjoint from $K,$ such that the link diagram  $L$ consisting of the disjoint union of $K$ and $C$ undergoes virtual isotopy to a diagram $D.$ One oriented saddle point move on $D$  forms a new knot $K'$. It is sufficient to prove that $P_{K} = P_{K'}.$ To prove this fact, note that by taking a constant labeling of $C,$ we have a defined polynomial $P_{L}$ with $P_{K} = P_{L}.$ Then $L$ is isotopic to $D,$ and so by invariance of the affine index polynomial, $P_{K} = P_{L} = P_{D}.$ At the place of the saddle point move there is a label $a$ on the $K$ component of $D$ and a label $b$ on the $C$ component of $D.$ We can add $a-b$ to the labels on all arcs of the $C$ component of $D$ and retain a legal coloring of $D$ that does not change its polynomial evaluation (This is a general property of the labelings - they can always be shifted by a constant). Thus we may assume that $D$ is prepared with a labeling so that $P_{K} = P_{D},$ and the labels at the saddle point are the same. Then the saddle move can be performed, and the new diagram $K'$ inherits the same labeling. Hence $P_{D} = P_{K'}.$ We have proved that $P_{K} = P_{K'}.$  This completes the proof of the case of 
a birth followed by a saddle point. The remaining case is a saddle point followed by a death. In this case the link obtained after the saddle point inherits a labeling from the original knot and, given that the resulting link is isotopic to a disjoint union of a knot and a trivial circle, the argument proceeds as before. For links the criterion for the invariant to be defined is the existence of a labeling for the link diagram. Once we know that the labeling exists, the above arguments apply equally well to the case of links.\\

To complete the proof, we note that an elementary genus zero concordance from a link $L$ of two components $K$ and $K '$ with one saddle point as shown in Figure~\ref{onesaddle} has the property that  $P_{L} = P_{K} + P_{K'} = 0.$ The proof is by a labeling amalgamation argument as above. Similarly, if a concordance from knots 
$K$ to $K'$ consists in two saddle points as shown in Figure~\ref{twosaddle}, then $P_{K} = P_{K'}$ by two applications of the one saddle point observation.  These two types of saddle point interaction combined with the maximum and minimum cancellations with saddle poiints discussed above constitute a complete list of the possibilities in an arbitrary concordance.
See Figure~\ref{concordschema} for a typical example of a concordance schema. One sees, using the facts we have indicated here, that on passing through a critical level in the concordance, the value of the polynomial sum of the components of the link at that level is not changed. Thus the value of $P$ at the beginning of the concordance and the value of $P$ at the end are equal. This completes the proof that the affine index polynomial is 
an invariant of concordance of virtual knots and links.\\
\end{proof}
 
 \begin{figure}
     \begin{center}
     \includegraphics[width=5cm]{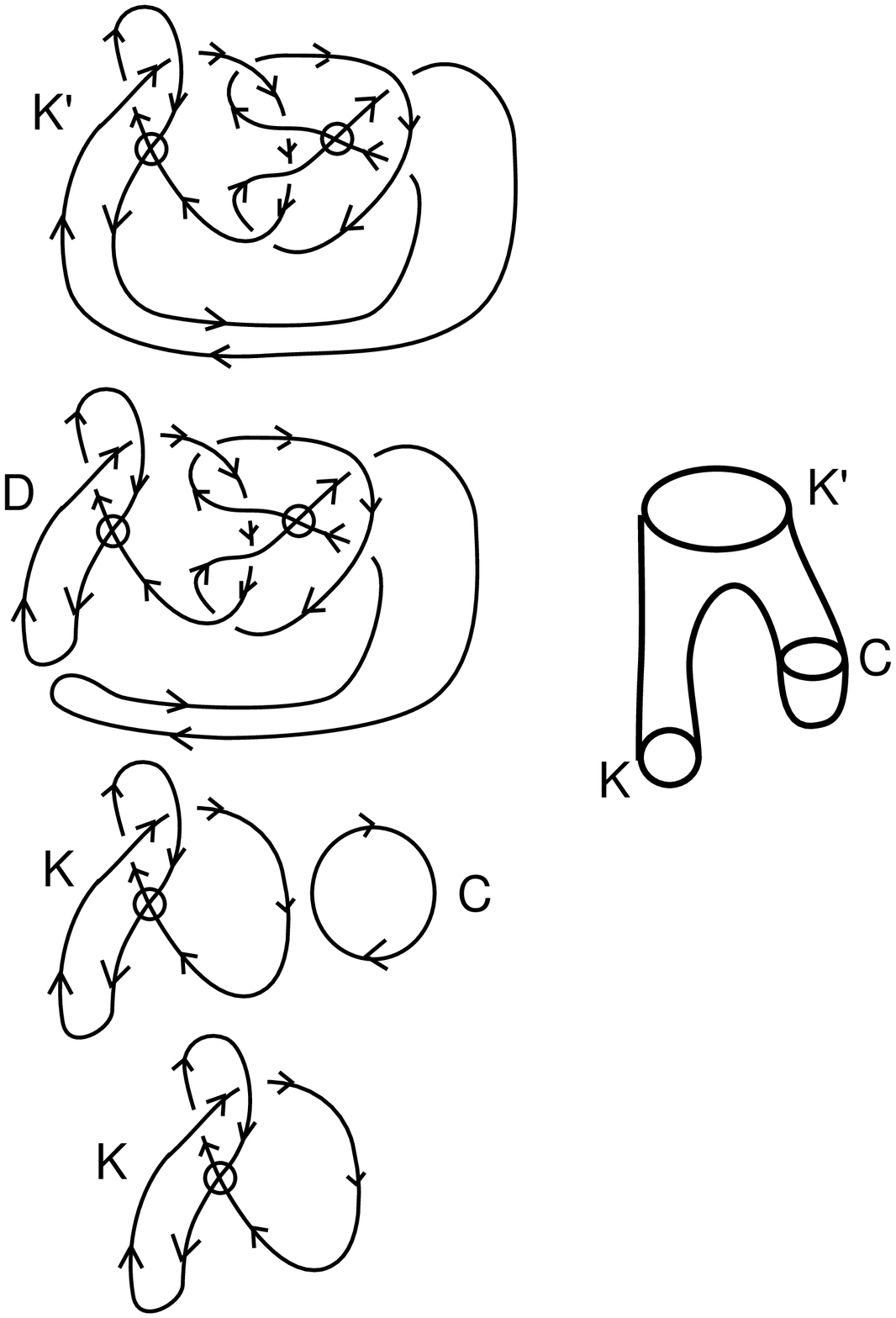}
     \caption{\bf An Elementary Concordance between $K$ and $K'$}
     \label{elem}
\end{center}
\end{figure}

\begin{figure}
     \begin{center}
     \includegraphics[width=5cm]{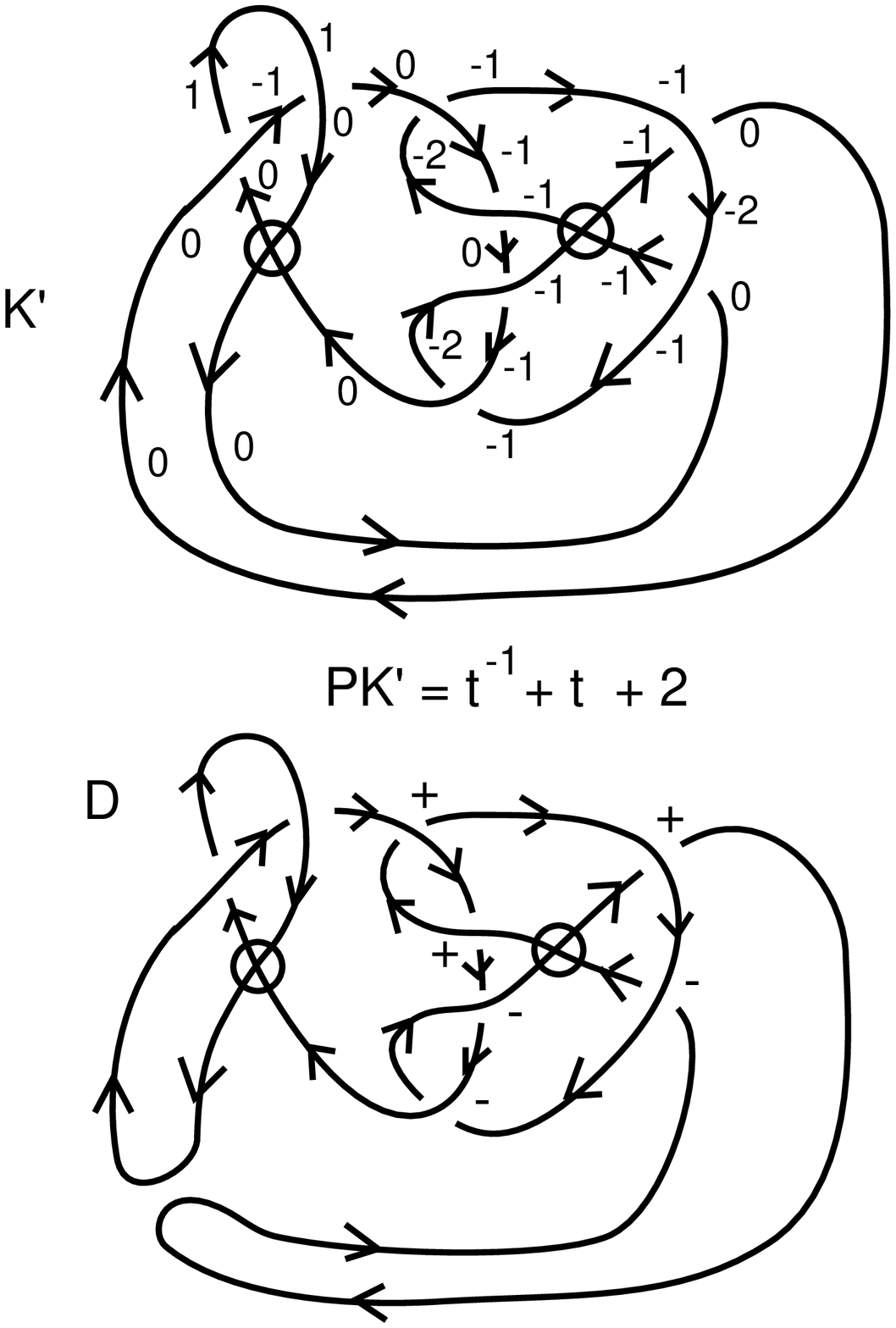}
     \caption{\bf An Elementary Labeled Concordance}
     \label{elemlabel}
\end{center}
\end{figure}

\begin{figure}
     \begin{center}
     \includegraphics[width=5cm]{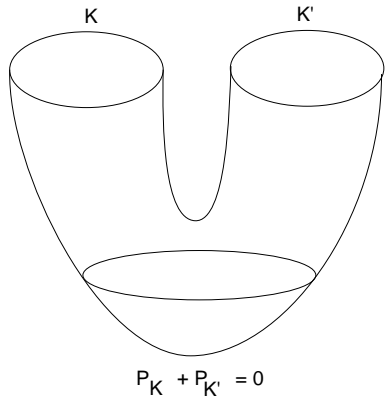}
     \caption{\bf Single Saddle Genus One Surface}
     \label{onesaddle}
\end{center}
\end{figure}

\begin{figure}
     \begin{center}
     \includegraphics[width=5cm]{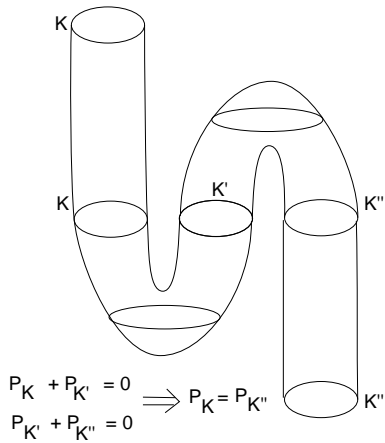}
     \caption{\bf Double Saddle Genus Zero Concordance}
     \label{twosaddle}
\end{center}
\end{figure}

\begin{figure}
     \begin{center}
     \includegraphics[width=5cm]{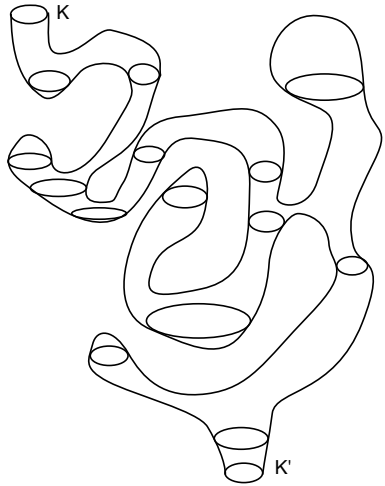}
     \caption{\bf Concordance Schema}
     \label{concordschema}
\end{center}
\end{figure}

\begin{figure}
     \begin{center}
     \includegraphics[width=5cm]{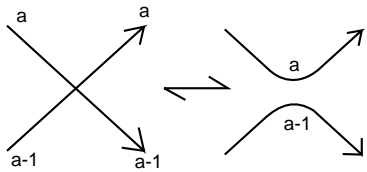}
     \caption{\bf Basic Labeled Cobordism}
     \label{basicob}
\end{center}
\end{figure}

\begin{figure}
     \begin{center}
     \includegraphics[width=6cm]{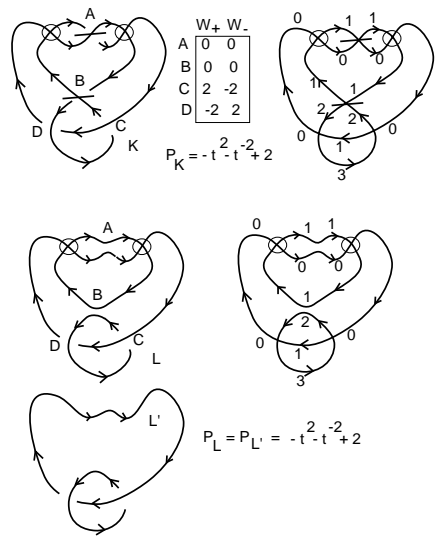}
     \caption{\bf Labeled Cobordism of a Knot to a Link}
     \label{cob}
\end{center}
\end{figure}

 \begin{figure}
     \begin{center}
     \includegraphics[width=6cm]{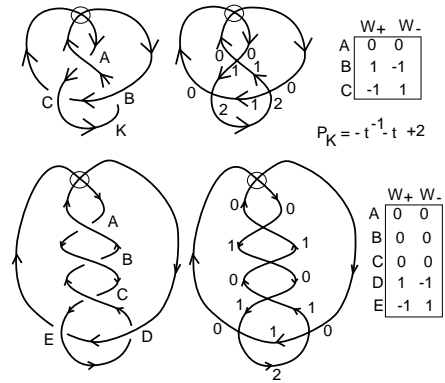}
     \caption{\bf A Family of Virtual Knots with the Same Polynomial}
     \label{family}
\end{center}
\end{figure}

\begin{figure}
     \begin{center}
     \begin{tabular}{c}
     \includegraphics[width=6cm]{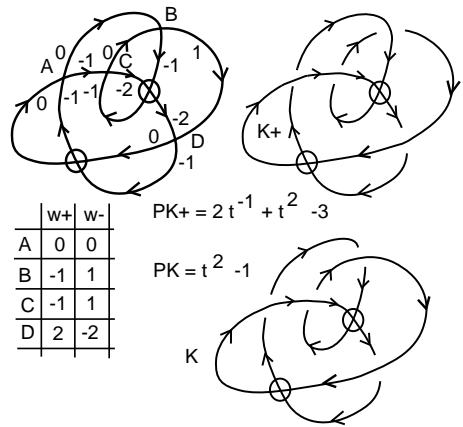}
     \end{tabular}
     \caption{\bf Polynomial Calculation for Two Knots}
     \label{two}
\end{center}
\end{figure}

\begin{figure}
     \begin{center}
     \begin{tabular}{c}
     \includegraphics[width=6cm]{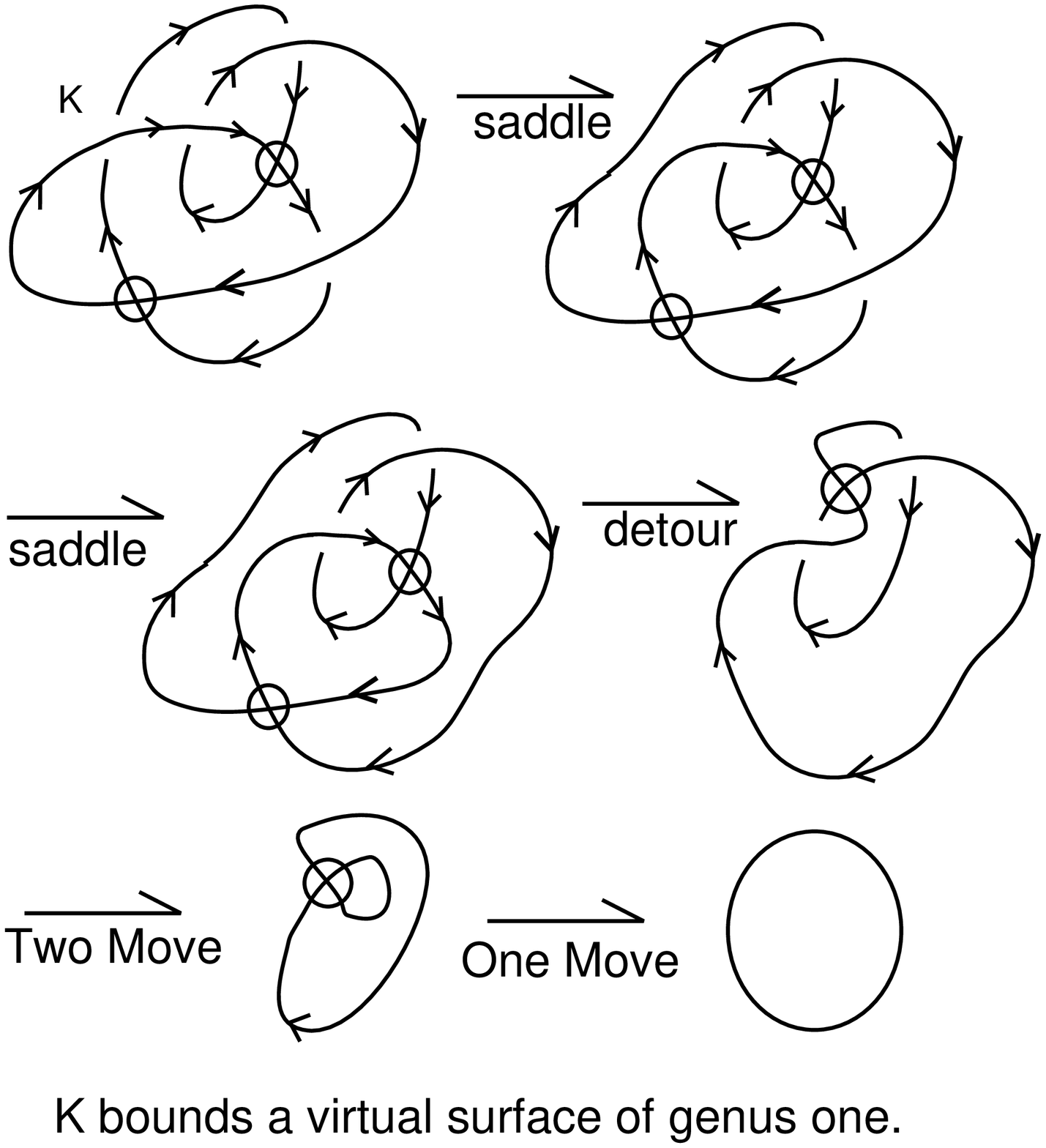}
     \end{tabular}
     \caption{\bf The Knot $K$ has virtual genus one.}
     \label{twotwo}
\end{center}
\end{figure}

\noindent {\bf Remark.} In Figure~\ref{elem} we illustrate an elementary concordance, as discussed in the proof above. The diagram $K'$ is transformed by a single saddle point move to the diagram $D$, which is isotopic
to a diagram that is the disjoint union of $K$ and $C$ where $C$ is an unknotted circle. Letting $C$ undergo death, we have a concordance from $K'$ to $K.$ We leave labeling this figure to the reader. It is clear that the 
crossings of the component of $D$ that becomes $C$ in the isotopy will have a total contribution of zero to the polynomial and that their contribution to $D$ is identical to their contribution to $K'.$ Thus we see directly in 
this case how $P_{K} = P_{K'}.$ In Figure~\ref{elemlabel} we show the weight calculation for the first part of the concordance in the previous figure. Note that the total weight contribution to the affine index polynomial from the unkotted and unlinked component (after the saddle move) is zero. This is in accord with the proof of Theorem 4.9.\\

\noindent {\bf Remark.} Any virtual slice knot $K$ will have $P_{K}(t) = 0$ since $K$ is concordant to the unknot. In the case of the virtual stevedore knot, we see in 
Figure~\ref{labelvstevedore} that all the weights are zero. We can ask when a virtual knot will have all of its weights equal to zero. It is certainly not the case that any virtual slice knot will have null weights.
For example, view Figure~\ref{consum} where we show the knot $K \sharp K^{!}$ where $K$ is the virtual trefoil, and $K^{!}$ denotes the vertical mirror image of $K.$ We know that $P_{K^{!}}(t) = -P_{K}(t)$ for any virtual knot $K.$ And so $P_{K \sharp K^{!}} = 0$ for any virtual knot $K.$ In fact, as remarked in the previous section, it is the case that $K \sharp K^{!}$ is virtually slice for any virtual knot $K.$ In such examples it is often the case that $P_{K}$ is non-trivial and so the diagram has canceling but non-null weights. This is the case in this specific example, where $P_{K} = t^{-1} + t - 2.$  \\

We finish this paper with a process that applies to most examples of the affine index polynomial. Taking a knot or link diagram $K$ with a labeling, some of the weights may be zero.
At each crossing with weight zero, we can smooth the crossing to obtain a link $L$ that is cobordant to $K$ (recall that smoothing a crossing can be accomplished by one saddle move). Thus we can smooth all crossings with null weights and obtain a knot of link $K'$ such that $K$ is cobordant to $K'$, $K'$ has only non-zero weights (or it is an unknot or unlink) and $P_{K} = P_{K'}.$ This process of removing crossings and making a cobordism that does not change the polynomial is particularly interesting in many examples. The link $K'$ in its way, contains the core of the invariant for $K$ and the remaining obstruction to making a concordance.
Here are descriptions of some examples of this phenomenon.\\

In Figure~\ref{basicob} and 
 Figure~\ref{cob} we illustrate how the appearance of zeroes in the list of vertex weights for the polynomial can be used to produce labeled knots and links where the crossings with null weights have been smoothed.  We will call the smoothing indicated in  Figure~\ref{basicob} a {\it basic labeled cobordism}. Thus if a knot has crossings with null weights, then it is labeled cobordant to a link with only non-zero weights (or an empty set of weights). While not all links can be labeled, this form of cobordism does produce labeled links, and the Index Invariant can be extended to such links as indicated in 
 Figure~\ref{hlink}. Here we write down the most general labeling for the link, and then deduce a set of variable integer exponents for the polynomial invariant, as described in Section 4.1.\\
 
 Figure~\ref{family} illustrates an infinite family of virtual knots with the same Affine Index Polynomial. Note that all of them are labeled cobordant to the Hopf link diagram. They can all be distinguished from one another
 by the bracket polynomial.\\
 
 In Figure~\ref{two} we illustrate the calculation of the affine index polynomial for two knots $K_{+}$ and $K.$ The knot $K_{+}$ is positive and by our theorem on the genus of positive virtual knots, it has genus two.
The knot $K$ is obtained from $K_{+}$ by switching one crossing. The affine index polynomial shows that it is not slice, and Figure~\ref{twotwo} shows that $K$ bounds a genus one virtual surface. Thus we know, using the affine index polynomial, that $K$ has genus equal to one.\\

\noindent {\bf Acknowledgements}

\noindent This work was supported by the Laboratory of Topology and Dynamics, Novosibirsk State University (contract no. 14.Y26.31.0025 with the Ministry of Education and Science of the Russian Federation).

\end{document}